\newtheorem{theorem}{Theorem}[section]
\newtheorem{lemma}[theorem]{Lemma}
\newtheorem{proposition}[theorem]{Proposition}
\newtheorem{definition}[theorem]{Definition}
\theoremstyle{definition}
\newtheorem*{convention}{Notation}
\theoremstyle{remark}
\numberwithin{equation}{section}
\newcommand{\Z}{\mathbb{Z}}
\newcommand{\Q}{\mathbb{Q}}
\newcommand{\R}{\mathbb{R}}
\newcommand{\co}{\mathcal{O}}
\newcommand*{\house}[1]{%
  \mathord{%
    \mathpalette\@house{#1}%
  }%
}
\newcommand*{\@house}[2]{%
  \dimen@=\fontdimen8 %
      \ifx#1\scriptscriptstyle\scriptscriptfont
      \else\ifx#1\scriptstyle\scriptfont
      \else\textfont\fi\fi
      3 %
  \sbox0{%
    $#1%
      \vrule width\dimen@\relax
      \overline{%
        \kern2\dimen@
        \begingroup 
          #2%
        \endgroup
        \kern2\dimen@
      }%
      \vrule width\dimen@\relax
      \mathsurround=1.5\dimen@ 
    $%
  }%
  \ht0=\dimexpr\ht0-\dimen@\relax
  \dp0=\dimexpr\dp0+2\dimen@\relax
  \vbox{%
    \kern\dimen@ 
    \copy0 %
  }%
}
\newcommand{\keywords}[1]{\noindent\textbf{Keywords:} #1}
\newcommand{\subjclass}[2]{\noindent\textbf{#1 Mathematics Subject Classification:} #2.}
\title{There is no 290-Theorem for higher degree forms}
\author[1]{V\'{i}t\v{e}zslav Kala}
\author[2]{Om Prakash}
\affil[1,2]{Charles University, Faculty of Mathematics and Physics, Department of Algebra, Sokolovsk\'{a} 83, 186 75 Praha 8, Czech Republic}
\affil[1]{E-mail: vitezslav.kala@matfyz.cuni.cz. ORCID: 0000-0001-5515-6801}
\affil[2]{E-mail: prakash@karlin.mff.cuni.cz. ORCID: 0009-0007-2124-9736}
\date{}
\begin{document}
\maketitle

\begin{abstract} 
We study the universality of forms of degrees greater than 2 over rings of integers of totally real number fields. We show that such universal forms always exist, but cannot be characterized by any variant of the 290-Theorem of Bhargava--Hanke.

\medskip
\keywords{universal quadratic form, higher degree form, Waring's problem,  totally real number field}
	 
\medskip
\subjclass{2020}{11E76, 11D25, 11D41, 11D85, 11P05, 11R80}

\medskip

\noindent\textbf{Funding:} Both authors were supported by {Czech Science Foundation} grant 21-00420M. O. P. was also supported by {Charles University} programmes PRIMUS/24/SCI/010 and UNCE/24/SCI/022.

\end{abstract}

\maketitle

\section{Introduction}

The arithmetic theory of quadratic forms has a long and colorful prehistory ranging from mathematicians in ancient Babylon, Egypt, Greece, and India, to Fermat, Lagrange, and Gauss. In the 20th century, Ramanujan, Dickson, and Willerding initiated classifications of \textit{universal} quadratic forms, i.e., positive definite forms that represent all positive integers, such as the sum of four squares $x^2+y^2+z^2+w^2$. These efforts culminated in  the 290-Theorem of Bhargava--Hanke \cite{bhargava2} saying that a positive definite quadratic form $Q$ is universal if it represents
    \begin{center}
        $ 1, 2, 3, 5, 6, 7, 10, 13, 14, 15, 17, 19, 21, 22, 23, 26,$\\
        $29, 30, 31, 34, 35, 37, 42, 58, 93, 110, 145, 203,\text{ and } 290.$
    \end{center}

This list of integers is minimal and unique in the sense that for each integer $n$ from the list, there is a form that represents all the positive integers except for $n$.

The 290-Theorem has been extended and generalized in several directions, including to subsets of positive integers (the conjectural 451-Theorem of 
Rouse \cite{rouse} concerns quadratic forms representing all odd positive integers) and to representations of quadratic forms. In both of these settings, finite \textit{criterion sets} characterizing universality exist \cite{kim-kim-oh},
but they need not be unique \cite{elkies}.

\medskip

The goal of this article is to consider universality of \textit{$m$-ic forms}, i.e., homogeneous polynomials of degree $m>2$, and the question whether they satisfy a version of the 290-Theorem.

The first example of an $m$-ic form is the sum of $m$th powers, about which Waring in his 1770 book \emph{Meditationes Algebraic\ae} asked if there is a constant $g(m)$ such that every positive integer is the sum of at most $g(m)$ $m$th powers of positive integers. Only in 1909, Hilbert established the existence of $g(m)$ for all $m\geq 1$; e.g., we have $g(1)=1, g(2)=4, g(3)=9, g(4)=19$.
After the works of many mathematicians (e.g., see \cite{kubina,linnik, niven, rieger, vaughan-wooley}), 
the values of $g(m)$ have now been almost completely determined, with $g(m)=2^m+\lfloor (3/2)^m\rfloor - 2$ for all except possibly finitely many $m$. 

Let us call an integral $m$-ic form $Q$ in $n$ variables \textit{positive definite} if $Q(x)>0$ for all $x\in\R^n, x\neq 0$ (this immediately forces $m$ to be even), and \textit{universal} if, moreover, for every positive integer $a$ there is $x\in\Z^n$ such that $Q(x)=a$. 
While Waring's problem and, more generally, representations of integers as sums of $m$th powers (e.g., the hard problems concerning the sums of two and three rational or integral cubes) received considerable attention, we are not aware of any results on other universal $m$-ic forms.

A partial reason for this is that the structural theory of $m$-ic forms is more complicated than in the case of quadratic forms, e.g., we do not have a good analogue of the Gram matrix of a quadratic form. Nevertheless, various aspects of the algebraic theory of quadratic forms were extended, starting with
Harrison \cite{harrison} who established Witt theory for higher degree forms over arbitrary fields of sufficiently large characteristics. Subsequently, results about abstract quadratic forms were extended to higher degree forms by various authors (e.g., see \cite{huang, morandi, pumpl1, pumpl2, pumpl3, pumpl4}).
Also, the negative solution to Hilbert's tenth problem by  Matiyasevich, Robinson, Davis, and Putnam (see \cite{davis})
means that higher degree Diophantine equations can be undecidable, and so one should not be surprised that studying integers represented by $m$-ic forms may be hard (and sometimes even impossible!).

Although at least some universal $m$-ic form exists (namely, the sum of $g(m)$ $m$th powers), our first main result shows that they cannot be characterized by any finite criterion set. Even more precisely, for every suitable finite subset $\mathcal{A}\subset\Z_{>0}$, there is an $m$-ic form that does not represent exactly the elements of $\mathcal A$.
(Here and in the following, we consider also the empty set to be finite.)

\begin{theorem}\label{mainoverz}
    Let $\mathcal{A}\subset\Z_{>0}$ be finite and $m\in\Z_{>2}$ even. Then the following conditions are equivalent:
    \begin{enumerate}
        \item 
    There exists a  positive definite $m$-ic form $Q$ that represents exactly $\Z_{\geq 0}\setminus \mathcal{A}$.
    \item  For all $a,b\in \Z$, we have that $ab^m\in \mathcal{A}$ implies $a\in \mathcal{A}.$
    \end{enumerate} Moreover, $Q$ can be chosen of rank $<(B+1)(g(m)+1)$, where $B$ is the largest element of $\mathcal A$.
\end{theorem}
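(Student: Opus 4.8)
The implication $(1)\Rightarrow(2)$ is immediate: if $Q$ represents exactly $\Z_{\geq 0}\setminus\mathcal A$ and $ab^m\in\mathcal A$, then (as $\mathcal A\subset\Z_{>0}$ and $m$ is even) $a>0$ and $b\neq 0$; were $a\notin\mathcal A$, some $v$ with $Q(v)=a$ would give $Q(bv)=b^mQ(v)=ab^m$, contradicting $ab^m\in\mathcal A$. So the content is $(2)\Rightarrow(1)$ together with the rank bound.

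Assume $(2)$ and put $\mathcal B=\Z_{\geq 0}\setminus\mathcal A$. If $\mathcal A=\emptyset$ then $x_1^m+\dots+x_{g(m)}^m$ works, so assume $\mathcal A\neq\emptyset$, $B=\max\mathcal A$; note $\Z_{\geq B+1}\subseteq\mathcal B$. I would build $Q$ in two disjoint blocks of variables, $Q=Q_{\mathrm{tail}}+Q_{\mathrm{low}}$. For the tail block, Hilbert's solution of Waring's problem gives that $x_1^m+\dots+x_{g(m)}^m$ represents every element of $\Z_{\geq 0}$, so $j(x_1^m+\dots+x_{g(m)}^m)$ represents exactly $j\Z_{\geq 0}$; taking
\[
Q_{\mathrm{tail}}=\sum_{j=B+1}^{2B+1} j\bigl(x_{j,1}^m+\dots+x_{j,g(m)}^m\bigr)
\]
yields a positive definite $m$-ic form of rank $(B+1)g(m)$ whose represented set is the numerical semigroup generated by the $B+1$ consecutive integers $B+1,\dots,2B+1$, namely $\{0\}\cup\Z_{\geq B+1}$. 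For the low block I would use a diagonal form $Q_{\mathrm{low}}=\sum_{c\in\mathcal C}c\,y_c^m$ over a finite multiset $\mathcal C$ of elements of $\mathcal B\cap[1,B]$; its value set $R(\mathcal C)=\{\sum_{c\in\mathcal C}c\,k_c^m:k_c\in\Z_{\geq 0}\}$ is automatically stable under multiplication by $m$-th powers. Since $R(\mathcal C)+(\{0\}\cup\Z_{\geq B+1})=R(\mathcal C)\cup\Z_{\geq B+1}$, the form $Q$ represents exactly $\bigl(R(\mathcal C)\cap[0,B]\bigr)\cup\Z_{\geq B+1}$, so it suffices to choose $\mathcal C$ with $|\mathcal C|\leq B$ and
\[
R(\mathcal C)\cap[0,B]=\mathcal B\cap[0,B];
\]
then $Q$ represents exactly $\mathcal B$, is positive definite (a positive combination of $m$-th powers of the coordinates), and has rank $(B+1)g(m)+|\mathcal C|<(B+1)(g(m)+1)$.

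It remains to build such a $\mathcal C$, which I would do greedily by induction on $n=1,\dots,B$, maintaining $R(\mathcal C)\cap[0,n]=\mathcal B\cap[0,n]$ and the stronger invariant $R(\mathcal C)\cap[0,B]\cap\mathcal A=\emptyset$. At stage $n$: if $n\in R(\mathcal C)$ do nothing (and if $n\in\mathcal A$ the invariant shows this case cannot occur); if $n\in\mathcal B\setminus R(\mathcal C)$, adjoin the single coefficient $c_n:=n-v$, where $v$ is the largest element of $R(\mathcal C)$ with $v<n$ and $n-v\in\mathcal B$ (such $v$ exists, as $v=0$ qualifies), so that $c_n\in\mathcal B\cap[1,n]$ is a legal coefficient and $n=v+c_n$ becomes representable. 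Condition~$(2)$ is exactly what tames the power-multiples: for any coefficient $c\in\mathcal B$ and any $k$, the implication $ck^m\in\mathcal A\Rightarrow c\in\mathcal A$ forces $ck^m\in\mathcal B$; more generally it constrains $\mathcal A$ sharply (e.g.\ no $m$-th power can lie in $\mathcal A$ once $1\notin\mathcal A$), which is precisely the room one needs.

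The main obstacle is checking that adjoining $c_n$ preserves the invariant, i.e.\ that every newly created value $w+c_nk^m\leq B$ (with $w$ an old representation, $k\geq 1$) lies in $\mathcal B$. For $k\geq 2$ and $w=0$ this is immediate from $(2)$; the remaining cases — ordinary subset sums $w+c_n$ and mixed terms pairing a power-multiple with earlier coefficients — are the combinatorial heart of the argument, and this is where the particular choice of $c_n$, combined with the rigidity of $\mathcal A$ coming from $(2)$, must be exploited to rule out "overshooting" a forbidden value. Granting this, one adjoins at most one coefficient per stage, so $|\mathcal C|\leq|\mathcal B\cap[1,B]|\leq B$, which completes the rank estimate.
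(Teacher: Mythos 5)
Your implication $(1)\Rightarrow(2)$ and your tail block are fine and in fact coincide with the paper's: the form $\sum_{j=B+1}^{2B+1} j(x_{j,1}^m+\dots+x_{j,g(m)}^m)$ is exactly the paper's $Q_B$, and the Frobenius-type observation plus Hilbert's theorem gives that it represents exactly $\{0\}\cup\Z_{\geq B+1}$. The problem is the low block. You yourself flag that the preservation of your greedy invariant is the unproven ``combinatorial heart,'' but the gap is worse than an unfinished verification: the statement you need --- that under condition $(2)$ there is a multiset $\mathcal C\subset\mathcal B\cap[1,B]$ with $R(\mathcal C)\cap[0,B]=\mathcal B\cap[0,B]$ --- is false in general. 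Take $\mathcal A=\{3,\,2^m+1\}$; this satisfies $(2)$, since $3$ and $2^m+1$ are $m$-th-power-free (note $2^m<2^m+1<3^m$). Any diagonal form with positive integer coefficients that represents $1$ must contain a coefficient equal to $1$, and to represent $2$ it must contain either a coefficient $2$ or a second coefficient $1$. In the first case it also represents $1+2=3\in\mathcal A$, in the second it represents $1\cdot 2^m+1\cdot 1=2^m+1\in\mathcal A$. So no diagonal low block (indeed no diagonal $m$-ic form at all, and your whole $Q$ is diagonal) can represent exactly $\Z_{\geq0}\setminus\mathcal A$ for this $\mathcal A$; your greedy breaks precisely here, since both admissible choices of $c_2$ ($c_2=2$ via $v=0$, or $c_2=1$ via $v=1$) violate the invariant.

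This is exactly why the paper's low block is not diagonal: it takes $Q'(y_1,\dots,y_k)=\sum_i b_iy_i^m+\delta\sum_{i<j}y_i^2y_j^{m-2}$ with $\{b_i\}=\{1,\dots,B\}\setminus\mathcal A$ and $\delta>B$. The cross terms force any representation of a value $\leq B$ by $Q'$ to use a single nonzero variable, hence to be of the form $b_iy_i^m$, and condition $(2)$ then guarantees such a value is not in $\mathcal A$; the rank count $(B+1)g(m)+k<(B+1)(g(m)+1)$ comes out the same as yours. To repair your argument you need some mechanism of this kind (cross terms, or another way to kill unwanted multi-term sums); the purely diagonal route cannot be completed.
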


We will prove this as Theorem \ref{th:Z main}  by explicitly constructing the form $Q$.
The fundamental difference from the case of  quadratic forms is that there are  infinitely many positive definite $m$-ic forms that represent (say) two given positive integers $a,b$, namely, $ax^m+\delta x^2y^{m-2}+by^m$ for any $\delta>0$, whereas if $a,b$ are not in the same square class, then there are only finitely many such quadratic forms $ax^2+cxy+by^2$, as the Cauchy--Schwartz inequality limits the possible range for $c\in\Z$. Thus, if one were to try to carry out an escalation process as in the proof of the 290-Theorem, in each step one would have to consider infinitely many forms, and so it should not be surprising that the whole argument may fail.

\medskip

It is very natural to extend the study of universal quadratic forms from $\Z$ to the ring of integers $\mathcal{O}_K$ in a totally real number field $K$; a quadratic form over $K$ is then called \textit{universal} if it represents all totally positive integers in $\mathcal{O}_K.$ Maa{\ss} \cite{mass} proved that the sum of three squares $x^2+y^2+z^2$ is universal over $\Q(\sqrt{5})$, but then Siegel \cite{siegel5} showed that if the sum of squares is universal over $K$, then $K$ must be $\Q$ or $\Q(\sqrt{5}).$ However, a universal quadratic form exists over every totally real number field, thanks to the result of Hsia--Kitaoka--Kneser \cite{hkk}. This raises the question of how many variables these universal forms need. Blomer--Kala \cite{blomerkala} proved that for every positive integer $M$, there are infinitely many real quadratic fields that do not admit a universal quadratic form with less than $M$ variables. Subsequently, this result was extended to cubic fields by Yatsyna \cite{yatsyna}, and then to fields of degree $d,$ where $d$ is divisible by $2$ or $3$ \cite{kala}. Regardless, finite universality criterion sets exist over all totally real number fields, even for representations of quadratic forms \cite{chan-oh}.

Similarly, Waring's problem has been extended to number fields, however, one encounters the possibility that a totally positive algebraic integer may not be expressible as the sum of $m$th powers at all. For example,  $3+\sqrt{2}\in\Z[\sqrt{2}]$ is totally positive, but cannot be written as the sum of squares, because every square $(a+b\sqrt{2})^2=a^2+2b^2+2ab\sqrt{2}$ 
has even coefficient at $\sqrt 2$.
In fact, 
Siegel \cite[Theorem III]{siegel5} established 
that, for an even integer $m>2$, the sum of $m$th powers is never universal over the ring of integers in a totally real number field $K$, unless $K=\Q$.

To overcome this issue, one considers the subring $\sum\mathcal{O}_K^m$ of $\mathcal{O}_K$ generated by $m$th powers of algebraic integers. Waring's problem then asks what is the smallest integer $G=G_K(m)$ such that for every totally positive $\alpha\in\sum\mathcal{O}_K^m$ with sufficiently large norm ($N(\alpha)>P_K(m)$), the equation 
$\alpha=x_1^m+x_2^m+\dots+x_{G}^m$ is solvable (in totally non-negative algebraic integers $x_1,\dots, x_G$)?

Siegel \cite{siegel4} solved this problem by generalizing the circle method to number fields; he obtained an upper bound $G_K(m)\leq dm\left(2^{m-1}+d\right)+1$ where $d=[K:\Q]$. Later, the work of Birch \cite[Theorem $2$]{birch} and Ramanujam \cite[Proposition 3]{ramanujam}
gave upper bounds on $G_K(m)$ independent of $d$; together, we have $G_K(m)\leq \max(8m^5, 2^m+1)$. Let us note that while $G_K(m)$ can be thus nicely bounded independently of the number field, the ``sufficiently large norm'' bound $P_K(m)$ depends on $K$ and is harder to control.

Overall, it is unclear whether universal higher degree forms over number fields even exist! 
However, we show that they do exist in Theorem \ref{p22} using Waring's problem and some estimates from geometry of numbers.

One can again consider the existence of criterion sets, except that now, by the homogeneity of an $m$-ic form $Q$, it is natural to consider only representations of elements up to multiplication by $m$th powers of units. We again show (as Theorem \ref{th:number field main}) that finite criterion sets do not exist, in a rather strong and precise sense.

\begin{theorem}\label{mainoverfields}
    Let $K$ be a totally real number field, $m>2$ an even positive integer, and $\mathcal{A}_0$ a finite subset of $\mathcal{O}_K^+$. Set $\mathcal{A}=\mathcal{A}_0\cdot\mathcal{O}_K^{\times m}=\{\delta\varepsilon^m \mid \delta\in \mathcal{A}_0, \varepsilon\in\mathcal{O}_K^{\times}\}.$ Then the following conditions are equivalent:
    \begin{enumerate}
        \item 
    There exists a totally positive definite $m$-ic form that represents exactly $\left(\mathcal{O}_K^+\setminus \mathcal{A}\right)\cup \{0\}$. 
    \item For all $\alpha,\beta\in\mathcal{O}_K$ we have that $\alpha\beta^m\in\mathcal{A}$ implies $\alpha\in\mathcal{A}.$
    \end{enumerate}
\end{theorem}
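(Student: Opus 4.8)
\textbf{The direction $(1)\Rightarrow(2)$} is the homogeneity argument, exactly as over $\Z$. Suppose $Q$ represents precisely $(\mathcal{O}_K^+\setminus\mathcal{A})\cup\{0\}$ and let $\alpha\beta^m\in\mathcal{A}$. Every element of $\mathcal{A}$ is totally positive and $m$ is even, so $\beta^m$ is totally nonnegative; since $\beta=0$ would give $\alpha\beta^m=0\notin\mathcal{A}$, we have $\beta\neq0$, hence $\beta^m$ is totally positive and therefore so is $\alpha$. If $\alpha\notin\mathcal{A}$, then $Q(x)=\alpha$ for some integral point $x$, and then $Q(\beta x)=\beta^m\alpha\in\mathcal{A}$ by homogeneity, contradicting that $Q$ represents no element of $\mathcal{A}$. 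Hence $\alpha\in\mathcal{A}$.

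\textbf{For $(2)\Rightarrow(1)$, setup.} Representability by an $m$-ic form is automatically invariant under multiplying the argument by a unit, hence by an element of $\mathcal{O}_K^{\times m}$, which is what makes the value set $(\mathcal{O}_K^+\setminus\mathcal{A})\cup\{0\}$ a sensible target when $\mathcal{A}=\mathcal{A}_0\cdot\mathcal{O}_K^{\times m}$. Since $N(\varepsilon)=\pm1$ for units, every element of $\mathcal{A}$ has norm at most $B:=\max_{\delta\in\mathcal{A}_0}|N(\delta)|$, so $\mathcal{A}$ is contained in the set of totally positive integers of norm $\le B$, and conversely every totally positive integer of norm $>B$ lies outside $\mathcal{A}$. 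Condition (2), in contrapositive form, says exactly that for every $\delta\in\mathcal{O}_K^+\setminus\mathcal{A}$ the whole line $\{\delta\gamma^m:\gamma\in\mathcal{O}_K\}$ is disjoint from $\mathcal{A}$. Finally there are only finitely many $\mathcal{O}_K^{\times m}$-orbits of totally positive integers of norm $\le B$ (finitely many ideals of bounded norm and $[\mathcal{O}_K^\times:\mathcal{O}_K^{\times m}]<\infty$), and each such orbit lies entirely inside or entirely outside $\mathcal{A}$; I fix representatives $\delta_1,\dots,\delta_s\in\mathcal{O}_K^+\setminus\mathcal{A}$ of those orbits that avoid $\mathcal{A}$.

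\textbf{The construction.} Pick a rational integer $N>B$ and let $Q$ be the sum of: the monomials $\delta_1u_1^m,\dots,\delta_su_s^m$; the cross terms $N\,u_i^2u_{i'}^{m-2}$ for all $i\ne i'$; and a bulk piece $R_{\mathrm{bulk}}$, in variables disjoint from the $u_i$, built by the method of Theorem \ref{p22} so that every nonzero value of $R_{\mathrm{bulk}}$ is totally positive of norm $>B$ while $R_{\mathrm{bulk}}$ represents every totally positive integer of norm $>B$. Then $Q$ is totally positive definite and $Q(0)=0$. Each $\delta_i$ is represented ($u_i=1$, everything else $0$), so by homogeneity and the invariance above so is every totally positive integer of norm $\le B$ outside $\mathcal{A}$; and $R_{\mathrm{bulk}}$ supplies all totally positive integers of norm $>B$. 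Conversely, in any representation $Q(x)=\eta$ all summands are totally nonnegative, so $N(\eta)$ is at least the norm of each summand: if two distinct $u_i,u_{i'}$ are nonzero, the summand $N\,u_i^2u_{i'}^{m-2}$ is a totally positive integer of norm $\ge N^{[K:\Q]}>B$; if a variable of $R_{\mathrm{bulk}}$ is nonzero, its summand has norm $>B$ by construction; in either case $\eta\notin\mathcal{A}$. Otherwise at most one $u_i$ is nonzero and nothing else is, so $\eta=\delta_iu_i^m$ or $\eta=0$, which lies on a line disjoint from $\mathcal{A}$. Hence $Q$ represents exactly $(\mathcal{O}_K^+\setminus\mathcal{A})\cup\{0\}$.

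\textbf{Main obstacle.} The cross-term coefficient $N$ is precisely the parameter $\delta$ in the family $ax^m+\delta x^2y^{m-2}+by^m$ from the remark after Theorem \ref{mainoverz}; taking it large is what rules out accidental small representations, and over a totally real field this is clean because the norm of a sum of totally nonnegative integers dominates the norm of each summand. The genuinely new difficulty is arithmetic: because $\mathcal{O}_K^\times$ contains infinitely many totally positive units with arbitrarily small conjugates, both $\mathcal{A}$ and the set of totally positive integers of norm $\le B$ are infinite, and one cannot simply scale past a neighbourhood of $0$ the way one scales past $[1,B]$ over $\Z$; the argument must instead be run modulo $\mathcal{O}_K^{\times m}$, via a finite transversal. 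I expect the single hardest ingredient to be the bulk piece $R_{\mathrm{bulk}}$, equivalently the input Theorem \ref{p22}: it relies on Waring's theorem over $K$ (the bounds on $G_K(m)$ of Siegel, Birch and Ramanujam) together with geometry-of-numbers estimates that scale totally positive integers of large norm into the finite-index subring $\sum\mathcal{O}_K^m$, where the ``sufficiently large norm'' threshold $P_K(m)$ is poorly controlled; the finitely many small-norm elements that fall outside the Waring range are handled by the monomial pieces $\delta_iu_i^m$ rather than by $R_{\mathrm{bulk}}$, and are simply omitted from the list $\delta_1,\dots,\delta_s$ when they lie in $\mathcal{A}$ --- which is consistent exactly by hypothesis (2).
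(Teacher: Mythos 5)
Your argument follows the same overall route as the paper: the implication $(1)\Rightarrow(2)$ by homogeneity, and for $(2)\Rightarrow(1)$ a form of the shape (bulk piece representing exactly the large-norm elements) $\perp$ (diagonal terms $\delta_i u_i^m$ over a finite transversal of the $\mathcal{O}_K^{\times m}$-orbits of small-norm elements outside $\mathcal{A}$, glued with cross terms $N u_i^2 u_{i'}^{m-2}$ whose coefficient has norm $>B$ to forbid mixed representations). All of that matches the paper's proof of Theorem \ref{th:number field main} essentially verbatim (the paper uses a totally positive $\mu$ with $N(\mu)>B$ where you use a rational integer $N>B$; immaterial), and your verification that no element of $\mathcal{A}$ is represented is correct given the properties you assume of the bulk piece.

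The genuine gap is the bulk piece itself. You assert that $R_{\mathrm{bulk}}$ can be "built by the method of Theorem \ref{p22}" so that it represents every totally positive integer of norm $>B$ while all its nonzero values have norm $>B$. The method of Theorem \ref{p22} does not deliver this: the universal form constructed there necessarily takes small values (it contains pure power terms $y_i^m$ and small-norm diagonal terms $\alpha_i z_i^m$, so it represents $1$ and other elements of norm $\le B$), and the whole difficulty is to cover \emph{all} large-norm elements while avoiding \emph{every} small value. This is exactly the content of the paper's Proposition \ref{p23}, which requires a further idea beyond universality: one takes $Q_B=\perp_{\alpha\in\mathcal{S}}\alpha Q$ where $Q$ is the universal form and $\mathcal{S}$ is a transversal of the orbits of elements $\alpha$ with $B<N(\alpha)\le C$ for a suitable threshold $C=\max(M_n,c^{-d}B)$, and then, to represent an arbitrary $\beta$ with $N(\beta)>C$, one applies Lemma \ref{l20} with a rational integer $n$ satisfying $n^d>B$ to peel off summands $n\varepsilon_1^m, n\varepsilon_2^m,\dots$ iteratively, the norms of the remainders strictly decreasing by a definite factor, until the remainder $\beta_k$ lands in the window $B<N(\beta_k)\le C$; then $\beta$ is a sum of at most two pieces $\alpha\delta_\alpha$ with $\alpha\in\mathcal{S}$ and $\delta_\alpha\in\mathcal{O}_K^+$, each handled by universality of $Q$. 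Without this descent argument (or some substitute), the existence of your $R_{\mathrm{bulk}}$ is unproved, and it is precisely the hardest step of the theorem, as you yourself flag; citing Proposition \ref{p23} or reproducing its proof would close the gap.
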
 

This theorem includes Theorem \ref{mainoverz} as a special case. However, the separate proof of Theorem \ref{mainoverz} is easier, and yields an explicit bound on the rank of the form $Q$, and so we include both in the paper.

Finally, note that \textit{infinite} criterion sets always exist, as the trivial example of all of $\mathcal{O}_K^+$ shows. Applying Theorem \ref{mainoverfields} to $\mathcal A_0=\{\alpha\}$ for an element $\alpha\in\co_K^+$ that is $m$th powerfree (in the sense that  if $\beta^m\mid\alpha$, then $\beta\in\mathcal{O}_K^{\times}$), we see that such $\alpha$ must be contained in every criterion set. Conversely, the set of all $m$th powerfree elements is clearly a criterion set, and so this is the unique minimal criterion set with respect to inclusion. This set has positive density (say, when we order classes of totally positive integers modulo units by norm), and so every criterion set for $m$-ic forms must also have positive density.

To conclude, let us mention that our results leave very much open the extensions of other results on universal quadratic forms to the $m$-ic setting (although the second author \cite{prakash} very recently investigated some of them). In particular, we do not know much about the minimal ranks of universal $m$-ic forms over number fields -- but expect that this will turn out to be a fruitful direction of future research!

\section*{Acknowledgments}

We thank Pavlo Yatsyna and the UFOCLAN group for interesting and helpful discussions about the paper, and the anonymous referees for their extremely fast and detailed reviews that helped us improve the paper.

\section{Preliminaries}\label{s4}
Let $K$ be a \emph{totally real} number field of degree $d$, i.e.,  all its embeddings $\sigma_1, \sigma_2,\dots, \sigma_d:K\rightarrow \mathbb{C}$ have image in $\mathbb{R}.$ 

We denote its \emph{ring of integers} and group of units by $\mathcal{O}_K$ and $\mathcal{O}_K^{\times}$. An element $\alpha\in K$ is  \emph{totally positive} if $\sigma_i(\alpha)>0$ for all $1\leq i\leq d.$ For a subset $S\subset K$, we denote  by $S^+$ the set of all  totally positive elements of $S$. We write $\alpha\succ \beta$ if $\alpha-\beta$ is totally positive. 

For $\alpha\in K,$ its \emph{norm} and \emph{trace}  are $N(\alpha)=\sigma_1(\alpha)\sigma_2(\alpha)\cdots\sigma_d(\alpha)$ and $Tr(\alpha)=\sigma_1(\alpha)+\sigma_2(\alpha)+\dots+\sigma_d(\alpha).$ Also, we define the \emph{house} $\house{\alpha}$ of $\alpha$ as $\house{\alpha}=\max_{1\leq i\leq d} |\sigma_i(\alpha)|.$

Let us consider the Minkowski embedding $\sigma:K\rightarrow \mathbb{R}^d$ given by $\sigma(\alpha)=(\sigma_1(\alpha), \dots,\sigma_d(\alpha)).$
When $\alpha\in\mathcal{O}_K^+,$ then $\sigma(\alpha)$ lies in the totally positive orthant $\mathbb{R}^{d,+}=\{(x_1,x_2,\dots,x_d)\in\mathbb{R}^d \mid x_i>0 \text{ for all } 1\leq i\leq d\}$ in $\mathbb{R}^d.$

We now define $m$-ic forms over $K.$
\begin{definition}
    Let $m$ be a positive integer. An \emph{$m$-ic form} over $K$ is 
    $$Q\left(x_1, x_2, \dots, x_n\right)=\underset{i_1+i_2+\dots+i_n=m}{\underset{i_1,\dots, i_n \geq 0}{\sum}} a_{i_1i_2\dots i_n}x_1^{i_1}x_2^{i_2}\cdots x_n^{i_n}, $$ where $a_{i_1i_2\dots i_n}\in \mathcal{O}_K.$

    An $m$-ic form $Q$ over $K$ is \emph{totally positive definite} if for all embeddings $\sigma:K\rightarrow\R$ we have that $$\sigma(Q)\left(x_1, x_2, \dots, x_n\right)=\underset{i_1+i_2+\dots+i_n=m}{\underset{i_1,\dots, i_n \geq 0}{\sum}} \sigma(a_{i_1i_2\dots i_n})x_1^{i_1}x_2^{i_2}\cdots x_n^{i_n}$$ is positive definite over $\mathbb{R}$ in the sense that $\sigma(Q)(x)>0$ for all $x\in\R^n\setminus\{(0,\dots,0)\}$.
\end{definition}

Observe that the degree $m$ of a totally positive definite form $Q$ must be even, as we have $Q\left(-x_1, -x_2, \dots, -x_n\right)=\left(-1\right)^mQ\left(x_1, x_2, \dots, x_n\right).$

We say that $\alpha\in \co_K$ is \textit{represented} by an $m$-ic form $Q,$ if there is $\left(y_1, y_2,\dots, y_n\right)\in\mathcal{O}_K^n$ such that $Q\left(y_1, y_2,\dots, y_n\right)=\alpha.$ A totally positive definite $m$-ic form $Q$ is \emph{universal} if $Q$ represents all elements of $\mathcal{O}_K^+.$

Let $Q_1$ and $Q_2$ be $m$-ic forms over $K$ in $n_1$ and $n_2$ variables. The \emph{orthogonal sum} of $Q_1$ and $Q_2$ is the $m$-ic form $Q=Q_1\perp Q_2$ given by 
$Q(x_1,\dots,x_{n_1},y_1,\dots,y_{n_2})=Q_1(x_1,\dots,x_{n_1})+Q_2(y_1,\dots,y_{n_2})$.

\section{Main result over $\mathbb{Z}$}\label{s3}

Let us begin the case $K=\Q$  by showing that one cannot generalize 290-Theorem for higher degree forms. Throughout this section, $m>2$ is an even integer.

First, recall Hilbert's theorem regarding Waring's problem.
\begin{theorem}[\cite{hilbert}]\label{Waringproblem1}
    For each fixed $m,$ there exists $g(m)<\infty$ such that every positive integer can be expressed as the sum of at most $g(m)$ $m$th powers.
\end{theorem}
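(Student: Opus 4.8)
The plan is to follow Hilbert's original argument from 1909 (one could alternatively give Linnik's later elementary proof via Schnirelmann density, or invoke the Hardy--Littlewood circle method for large $n$ and dispatch the finitely many small values separately, but Hilbert's identity method is the most self-contained). It rests on two ingredients: a purely algebraic identity, and a descent built on Lagrange's four-square theorem. Writing $m=2k$ (we focus on even $m$, which is all this paper uses; the odd exponents are handled by a similar circle of ideas), the heart of the matter --- Hilbert's lemma --- is the existence of a positive integer $B$, positive integers $c_1,\dots,c_r$, and integral linear forms $\ell_1,\dots,\ell_r$ in five variables, all depending only on $k$, such that
\[
  B\bigl(x_1^2+x_2^2+x_3^2+x_4^2+x_5^2\bigr)^k=\sum_{j=1}^r c_j\,\ell_j(x_1,\dots,x_5)^{2k}.
\]

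To establish this identity, I would use the fact that, by rotational symmetry, $(x_1^2+\dots+x_5^2)^k$ is a positive constant times the average of $(\xi_1x_1+\dots+\xi_5x_5)^{2k}$ over the unit sphere $\xi\in S^4$. As $\xi$ ranges over the sphere the integrand lies in the finite-dimensional space of homogeneous polynomials of degree $2k$ in $x_1,\dots,x_5$, so there is an exact quadrature formula writing this average as a finite combination, with positive weights, of evaluations at finitely many nodes $\xi^{(j)}\in S^4$; since positivity of the weights is an open condition, the nodes may be perturbed to rational points and then rescaled to integer vectors, and clearing denominators gives the displayed identity. (Hilbert instead constructed the identity explicitly by induction on $k$; either route works, and this is the one genuinely delicate step.)

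Granting the identity, the theorem follows by descent. By Lagrange's four-square theorem every nonnegative integer $t$ equals $x_1^2+x_2^2+x_3^2+x_4^2$, so setting $x_5=0$ in the identity shows that $Bt^k$ is a sum of at most $S:=\sum_j c_j$ $m$-th powers of integers, for every $t\ge 0$, with $B$ and $S$ depending only on $m$. One then bootstraps this to all integers by an induction on $m$, peeling off blocks of the shape $Bt^k$ and controlling the remainder at each stage; the trivial observation that every $n$ below a fixed bound is a sum of at most that many copies of $1^m$ handles the small values, and a careful count of the summands produced throughout yields a finite $g(m)$ uniform in $n$.

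The main obstacle is the algebraic identity: producing it with honest integer (not merely real) coefficients while keeping the number of terms $r$, hence $S$, finite. The descent is conceptually routine but must be arranged with care --- a greedy peeling of the largest block $Bt^k\le n$ leaves a remainder of size $O(n^{1-1/k})$ and so does not terminate in a bounded number of steps, and circumventing this is precisely where Hilbert's (and Stridsberg's) ingenuity lies. Making $g(m)$ effective, let alone establishing the known value $g(m)=2^m+\lfloor(3/2)^m\rfloor-2$, requires considerably more and is not needed here.
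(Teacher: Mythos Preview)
The paper does not prove this statement at all: Theorem~\ref{Waringproblem1} is simply quoted as Hilbert's classical result with a citation to \cite{hilbert}, and is used as a black box in the subsequent arguments (Propositions~\ref{largeelement} and the proof of Theorem~\ref{th:Z main}). So there is nothing to compare your proposal against on the paper's side.

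That said, your outline of Hilbert's 1909 argument is broadly accurate as a sketch. The five-variable identity you state is indeed Hilbert's key lemma, and your explanation via spherical averaging and quadrature with a rational perturbation is a standard modern way to see why such an identity must exist. You are also right to flag that the descent from ``every $Bt^k$ is a sum of boundedly many $m$th powers'' to ``every $n$ is'' is the genuinely nontrivial second step, and that naive greedy subtraction does not give a bounded number of summands. However, as written your proposal is an outline rather than a proof: you do not actually carry out either the rational perturbation argument for the identity (where one must check that the quadrature weights stay positive and that the resulting system is solvable over $\mathbb{Q}$) or the descent (where one needs the auxiliary identities that let one reduce the exponent, as in Hilbert's or Stridsberg's treatment). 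If you intend this as a self-contained proof rather than a pointer to the literature, those two gaps would need to be filled; if, like the paper, you are content to cite \cite{hilbert}, then no proof is needed here at all.
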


Moreover, it is conjectured \cite{vaughan-wooley} that $g(m)=2^m+\lfloor(3/2)^m\rfloor-2$ for every $m$. This has been verified for $m\leq 471,600,000$ by Kubina--Wunderlich \cite{kubina}. Unconditionally it is known that $g(m)\leq 2^m+\lfloor(3/2)^m\rfloor+\lfloor(4/3)^m\rfloor-2$ (cf. \cite[bottom of page 1]{vaughan-wooley}), from which one can easily get that $g(m)<2^{m+1}$ if one would like to have a concise upper bound.

\begin{proposition}\label{15h}
    Given a positive integer $B$ such that $B+1$ is $m$th powerfree, there is a positive definite $m$-ic form $Q$ that represents all the positive integers $\leq B$ but not $B+1.$
\end{proposition}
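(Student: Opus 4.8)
The plan is to build $Q$ as an orthogonal sum of many simple ``binary-type'' $m$-ic pieces, one for each integer $a$ we want to represent, and to tune a free parameter in each piece so that (i) each targeted $a$ is genuinely represented, (ii) $B+1$ is never hit, and (iii) the form stays positive definite. Concretely, for each $a$ with $1 \le a \le B$, consider a form like $Q_a(x,y) = a\,x^m + \delta_a\, x^{m/2} y^{m/2} + c\, y^m$ (or, more robustly, a sum-of-$m$th-powers block scaled by $a$ together with an auxiliary monomial), where the coefficient $\delta_a \in \Z_{>0}$ will be chosen large. Setting $x=1,y=0$ shows $Q_a$ represents $a$, hence the orthogonal sum $Q = \perp_{a=1}^{B} Q_a$ represents every positive integer $\le B$; and each $Q_a$ is positive definite for $\delta_a$ in a suitable range (or automatically, if we take the cross term to be a product of even powers, so that $Q_a \ge a x^m + c y^m$ up to the nonnegative middle term — one must check the middle term cannot be too negative, which is why it is cleanest to let it be a monomial in even powers only, making $Q_a$ a sum of manifestly nonnegative terms).

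The key step is arranging that $B+1$ is \emph{not} represented. Here I would use the hypothesis that $B+1$ is $m$th-powerfree together with a growth/size argument: if $Q(v) = B+1$ for some integer vector $v$, then each coordinate is bounded (since $Q$ is positive definite, $Q(v) = B+1$ forces $\house{v}$, i.e.\ the sup-norm of $v$, to be $O(1)$ depending on the coefficients), so only finitely many $v$ need to be excluded. For each such candidate $v$, the value $Q(v)$ is a fixed polynomial expression in the $\delta_a$; I would choose the $\delta_a$ one at a time, inductively, so large that the only way $Q(v)$ can equal the \emph{small} number $B+1$ is if all variables $y$ attached to cross terms vanish and only one block is ``active'' with $x = \pm 1$ — but then $Q(v) = a \cdot (\pm 1)^m = a \le B < B+1$, a contradiction. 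In other words, the $\delta_a$ act as a ``separation of scales'': any genuine use of a $y$-variable or of $|x|\ge 2$ immediately overshoots $B+1$. This is essentially the same mechanism flagged in the introduction — infinitely many forms represent two given integers, and we exploit that freedom.

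Actually, to make clause (iii) of Theorem~\ref{mainoverz} (condition (2) there) visible, note the only obstruction to representing all of $\Z_{\ge 0}\setminus\{B+1\}$ via such a construction is that if $B+1$ has the form $a b^m$ with $b \ge 2$, then representing $a$ would force representing $a b^m$ by plugging $(b y_1, \dots, b y_n)$ into the block representing $a$; the $m$th-powerfreeness of $B+1$ is exactly what rules this out, which is why it appears as a hypothesis here and as condition (2) in the theorem. So after the size reduction I only need to rule out the genuinely ``binary'' solutions, and the scale-separation choice of the $\delta_a$ does that.

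The main obstacle I expect is \textbf{simultaneously} controlling positive-definiteness and the non-representation of $B+1$: enlarging $\delta_a$ helps kill spurious representations of $B+1$ but, for a cross term of odd-ish shape, can destroy positive-definiteness, while shrinking it does the reverse. The clean resolution is to pick the cross monomials to involve only even powers of each variable (e.g.\ $x^2 y^{m-2}$ with $m$ even, or a product of squares), so that every monomial of $Q_a$ is a nonnegative real for real inputs and positive-definiteness is automatic and \emph{independent} of $\delta_a$; then $\delta_a$ is free to be taken as large as the finite exclusion argument demands. The bound on the rank, rank $Q < (B+1)(g(m)+1)$, then falls out by counting: one block per $a \in \{1,\dots,B+1\}$-ish, each block using at most $g(m)$ variables to write $a\cdot(\text{sum of }m\text{th powers})$ plus one extra variable for the auxiliary cross term.
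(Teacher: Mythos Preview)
Your instinct---use cross monomials with only even exponents (like $x^2 y^{m-2}$) so that positive definiteness is automatic and the coefficient $\delta$ can be pushed arbitrarily large to kill spurious representations---is exactly the mechanism the paper uses. But your specific construction has a genuine gap: because you take $Q$ to be an \emph{orthogonal sum} $Q=\perp_{a=1}^{B}Q_a$ of binary blocks, there are no cross terms linking variables in different blocks, and nothing prevents values from distinct blocks from adding up to $B+1$. Concretely, for $B\ge 2$, setting $(x_1,y_1)=(1,0)$ in the block $Q_1$ and $(x_B,y_B)=(1,0)$ in the block $Q_B$ (and all other variables zero) yields $Q=1+B=B+1$, regardless of how large the $\delta_a$ are. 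Your ``only one block is active'' reduction is therefore not justified: enlarging the $\delta_a$ penalizes a nonzero $y$-variable \emph{within} a block, but does nothing to stop simultaneous nonzero $x$-variables across blocks.

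The paper's fix is simple and in the same spirit: drop the orthogonal-sum structure and use a single form
\[
Q(x_1,\dots,x_B)=\sum_{i=1}^{B} i\,x_i^{m}+\delta\sum_{1\le i<j\le B} x_i^{2}x_j^{m-2}
\]
with one variable per target integer and cross terms between \emph{every} pair. Then any two nonzero variables $x_i,x_j$ force $Q\ge i+j+\delta>B+1$, so exactly one $x_i\ne 0$, giving $i\,x_i^{m}=B+1$; the $m$th-powerfreeness of $B+1$ and $i\le B$ finish the argument just as you sketched. (Also: the rank bound involving $g(m)$ belongs to Theorem~\ref{mainoverz}, not to this proposition, which makes no rank claim; the paper's form here has rank $B$.)
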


 \begin{proof}    
    For any $\delta\geq B$ consider the form 
    \begin{equation*}
        Q\left(x_1, x_2, \dots, x_{B}\right)=\sum_{i=1}^B ix_i^{m} + \delta \sum_{1\leq i<j \leq B} x_i^2x_{j}^{m-2},
    \end{equation*}
    which is positive definite, because each variable has an even exponent and each coefficient is positive.

    When we plug-in $x_i=1$ and $x_j=0$ for all $j\neq i$, we get $Q(0,\dots,0,1,0,\dots,0)=i$.
    Hence $Q$ represents every positive integer $\leq B.$ 
    
    Now we claim that $Q$ does not represent $B+1.$
    For contradiction, suppose that there exists $\left(y_1, y_2, \dots, y_{B}\right)\in \Z^B$ such that $Q\left(y_1, y_2, \dots, y_{B}\right)=B+1.$ If there exist $i<j$ such that $(y_i,y_j)\neq (0,0)$, then we have 
    $$Q\left(y_1, y_2, \dots, y_{B}\right)\geq iy_i^m+jy_j^m+\delta y_i^2y_j^{m-2}\geq i+j+\delta> B+1.$$  
    Thus there exists a unique $i$ such that $y_i$ is non-zero, and we have $Q\left(0, 0, \dots, y_{i},\dots, 0\right)=iy_i^{m}=B+1,$ which is impossible, since $i\leq B$ and $B+1$ is $m$th powerfree.
    \end{proof}

To establish our main result over $\Z$, we need the following proposition about representation of large elements.
\begin{proposition}\label{largeelement}
    Let  $B$ be a positive integer. The positive definite $m$-ic form 
    \begin{equation*}
        Q_B(x_{01}, \dots, x_{0g(m)},\dots, x_{B1},\dots, x_{Bg(m)})=\sum_{j=0}^B (B+1+j)\sum_{i=1}^{g(m)}x_{ji}^m
    \end{equation*}
represents exactly $\mathbb{Z}_{\geq 0}\setminus \left\{1, 2, \dots, B\right\}.$ Moreover, $\text{rank}(Q_B)=(B+1)g(m)$.
\end{proposition}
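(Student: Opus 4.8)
The statement about the rank is immediate, since the variables are indexed by the pairs $(j,i)$ with $0\le j\le B$ and $1\le i\le g(m)$, giving $(B+1)g(m)$ of them; and $Q_B$ is positive definite because every variable occurs with the even exponent $m$ and every coefficient $B+1+j$ is a positive integer. The plan is then to determine the set of represented integers by proving two things: (i) $Q_B$ represents no element of $\{1,2,\dots,B\}$; and (ii) $Q_B$ represents $0$ and every integer $n\ge B+1$. Since $Q_B$ evidently takes only values in $\Z_{\ge 0}$, combining (i) and (ii) yields exactly $\Z_{\ge 0}\setminus\{1,\dots,B\}$.

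Part (i) is a size argument: if $Q_B(y)=a$ with $1\le a\le B$, then some coordinate $y_{ji}$ is nonzero (as $a>0$), whence $a=Q_B(y)\ge (B+1+j)\,y_{ji}^m\ge B+1>B\ge a$, a contradiction. (The value $0$ is represented by the zero vector.)

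For part (ii), the point is that for each fixed $j$ the inner sum $\sum_{i=1}^{g(m)}x_{ji}^m$ can be made to equal any prescribed $s_j\in\Z_{\ge 0}$: this is clear for $s_j=0$, and for $s_j\ge 1$ it follows from Hilbert's theorem (Theorem~\ref{Waringproblem1}) by writing $s_j$ as a sum of at most $g(m)$ $m$th powers of positive integers and setting the remaining variables of the block to $0$. Hence the integers represented by $Q_B$ are exactly the nonnegative integer combinations $\sum_{j=0}^{B}(B+1+j)s_j$ of the $B+1$ \emph{consecutive} integers $B+1,B+2,\dots,2B+1$. So it suffices to show that every $n\ge B+1$ is such a combination: dividing, write $n=q(B+1)+r$ with $0\le r\le B$; since $n\ge B+1$ we must have $q\ge 1$, and then
\[
n=(B+1+r)+(q-1)(B+1),
\]
which is the desired combination (take $s_r=1$ and $s_0=q-1$ when $r\ge 1$, and $s_0=q$ when $r=0$, all other $s_j=0$). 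This proves (ii) and completes the argument.

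I do not expect a genuine obstacle here once the problem is reorganized in this way; the only points worth emphasizing are that the block size $g(m)$ is chosen precisely so that each inner sum realizes an \emph{arbitrary} nonnegative integer (Waring), and that having $B+1$ consecutive coefficients $B+1,\dots,2B+1$ is exactly what lets the elementary identity above cover all $n\ge B+1$, rather than only all sufficiently large $n$ as a general Frobenius-type bound would give.
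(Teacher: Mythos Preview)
Your proof is correct and follows essentially the same approach as the paper: both use the size bound on the coefficients to exclude $1,\dots,B$, invoke Waring's theorem so that each inner block $\sum_i x_{ji}^m$ realizes an arbitrary nonnegative integer, and then write $n\ge B+1$ as a nonnegative combination of the consecutive integers $B+1,\dots,2B+1$ via division with remainder (your $n=(B+1+r)+(q-1)(B+1)$ is exactly the paper's choice $y_j=1$, $y_0=(n-j)/(B+1)-1$ with $j=r$).
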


\begin{proof} $Q_B$ is the sum of non-negative terms, and so it is positive definite. Further, as all its coefficients are greater than $B$, $Q_B$ does not represent any integer $\leq B$. The rank of $Q_B$ is also clearly $(B+1)g(m)$.

Let us now take an integer $n\geq B+1$.
First, it is easy to observe that $n$  can be written as $$n=(B+1)y_0+(B+2)y_1+\dots+(2B+1)y_{B} \text{ for some }y_j\in\Z_{\geq 0}.$$ To see this, let $j=0,\dots, B$ be such that $n\equiv j\pmod {B+1}$, and set 
\begin{itemize}
    \item $y_h=0$ for all $h\neq 0, j$,
    \item $y_j=1$, $y_0=(n-j)/(B+1)-1$ if $j\neq 0$,
    \item $y_0=n/(B+1)$ if $j=0$.
\end{itemize}
(This observation can be viewed as an easy result on the Frobenius coin problem.)

Further, by Theorem \ref{Waringproblem1} each $y_j\in\Z_{\geq 0}$ is represented by the $m$-ic form $\sum_{i=1}^{g(m)}x_{ji}^m.$ Thus $Q_B$ represents $n$ as desired.
\end{proof}

\begin{theorem}\label{th:Z main}
    Let $\mathcal{A}\subset\Z_{>0}$ be finite and $m\in\Z_{>2}$ even. Then the following conditions are equivalent:
    \begin{enumerate}
        \item 
    There exists a  positive definite $m$-ic form $Q$ that represents exactly $\Z_{\geq 0}\setminus \mathcal{A}$.
    \item  For all $a,b\in \Z$, we have that $ab^m\in \mathcal{A}$ implies $a\in \mathcal{A}.$
    \end{enumerate} Moreover, $Q$ can be chosen of rank $<(B+1)(g(m)+1)$, where $B$ is the largest element of $\mathcal A$.
\end{theorem}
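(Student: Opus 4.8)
The plan is to prove the two implications separately. The direction $(1)\Rightarrow(2)$ is the easy one: if $Q$ represents exactly $\Z_{\geq0}\setminus\mathcal A$ and $ab^m\in\mathcal A$, then suppose for contradiction that $a\notin\mathcal A$, so $Q(\mathbf y)=a$ for some $\mathbf y\in\Z^n$; then $Q(b\mathbf y)=b^m Q(\mathbf y)=ab^m$ by homogeneity of degree $m$, so $ab^m$ is represented, contradicting $ab^m\in\mathcal A$. (One should note the degenerate cases $b=0$, which forces $ab^m=0\in\mathcal A$, impossible since $\mathcal A\subset\Z_{>0}$; and negative $b$, handled the same way since $m$ is even.)

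For the harder direction $(2)\Rightarrow(1)$, the idea is to build $Q$ explicitly as an orthogonal sum combining the two devices already developed in the section. First I would use condition (2) to understand the structure of $\mathcal A$: for each $a\in\mathcal A$, all its ``$m$th-power multiples'' $ab^m$ that are $\le B$ lie in $\mathcal A$, and conversely the elements of $\Z_{>0}\setminus\mathcal A$ that are $\le B$ should be representable. Concretely, enumerate the finitely many elements $a_1,\dots,a_k$ of $(\Z_{>0}\setminus\mathcal A)\cap\{1,\dots,B\}$ and form $Q_1=\sum_{t=1}^k a_t x_t^m+\delta\sum_{s<t}x_s^2x_t^{m-2}$ with $\delta$ large (say $\delta\ge B$), exactly as in Proposition \ref{15h}; the same argument there shows $Q_1$ represents each $a_t$ (plug in a single $1$) and, crucially, $Q_1$ represents \emph{no} element of $\mathcal A$ that is $\le B$: any nonzero value $\le B$ forces a single nonzero coordinate $y_t$, giving the value $a_t y_t^m$, and if this were some $a\in\mathcal A$ then $a=a_t y_t^m$ with $a_t\notin\mathcal A$ would contradict condition (2). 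Then I would take $Q_2=Q_B$ from Proposition \ref{largeelement}, which represents exactly $\Z_{\ge0}\setminus\{1,\dots,B\}$, and set $Q=Q_1\perp Q_2$.

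It remains to check that $Q=Q_1\perp Q_2$ represents exactly $\Z_{\ge0}\setminus\mathcal A$. It is positive definite as an orthogonal sum of positive definite forms, and $0$ is represented. For $n>B$: already $Q_2$ represents $n$, and $n\notin\mathcal A$ since $\mathcal A\subset\{1,\dots,B\}$. For $1\le n\le B$ with $n\notin\mathcal A$: then $n=a_t$ for some $t$, so $Q_1$ represents $n$. For $n\le B$ with $n\in\mathcal A$: I must show $Q$ does \emph{not} represent $n$. Writing a representation as $Q_1(\mathbf y)+Q_2(\mathbf z)=n$ with both terms nonnegative integers, and $Q_2$ taking no positive value $\le B$, we get $Q_2(\mathbf z)=0$, hence $\mathbf z=0$ and $Q_1(\mathbf y)=n\le B$; but $Q_1$ represents no element of $\mathcal A$ that is $\le B$, contradiction. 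Finally the rank bound: $Q_1$ has rank $k\le B$ and $Q_2=Q_B$ has rank $(B+1)g(m)$, so $\operatorname{rank}(Q)=k+(B+1)g(m)\le B+(B+1)g(m)<(B+1)(g(m)+1)$.

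The main obstacle — really the only non-routine point — is making sure the ``small part'' $Q_1$ is simultaneously rich enough to hit every small non-exceptional value and poor enough to miss every small exceptional value; condition (2) is exactly what makes the second requirement work, via the $m$th-power-multiple argument on a single nonzero coordinate. (A small subtlety worth stating: if $(\Z_{>0}\setminus\mathcal A)\cap\{1,\dots,B\}$ is empty — which happens only in edge cases like $\mathcal A=\{1\}$, $B=1$ — then $Q_1$ is the empty form and one should check the argument still goes through, with $Q=Q_B$ alone; indeed $\mathcal A=\{1\}$ is consistent with (2) and $Q_1$ with $B=1$ is vacuous.)
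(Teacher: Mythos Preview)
Your proposal is correct and follows essentially the same approach as the paper: the same form $Q_B$ from Proposition~\ref{largeelement} handles the large values, the same auxiliary form $\sum a_t x_t^m + \delta\sum_{s<t} x_s^2 x_t^{m-2}$ (the paper calls it $Q'$, with coefficients $b_i$) handles the small non-exceptional values, and the key step---that a representation of some $a\in\mathcal A$ by $Q'$ forces a single nonzero coordinate and hence $a=a_t y_t^m$ with $a_t\notin\mathcal A$, contradicting condition~(2)---is identical. Your remarks on the degenerate cases ($b=0$, $k=0$) are welcome clarifications not spelled out in the paper.
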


\begin{proof} 
$( (1) \Rightarrow (2) )$ Assume that $Q$ represents exactly $\Z_{\geq 0}\setminus \mathcal{A}$, and take $a\notin \mathcal{A}$. Thus we have $Q(x)=a$ for some $x\in\Z^n$ and then $Q(bx)=ab^m$ for every $b\in\Z$ by homogeneity of $Q$. As $Q$ represents $ab^m$, we must have $ab^m\notin\mathcal A$, as desired.

$( (2) \Rightarrow (1) )$ Assume that for all $a,b\in\Z$ we have that $ab^m\in\mathcal{A}$ implies $a\in \mathcal{A}$. Let $B$ be the largest element of $\mathcal A$ and let $Q_B$ be the form defined in Proposition \ref{largeelement}. Hence $Q_B$ represents exactly $\mathbb{Z}_{\geq 0}\setminus \left\{1, 2, \dots, B\right\}$ and has rank $(B+1)g(m).$

To construct the desired form $Q$, we need to arrange the representation of small elements that do not lie in $\mathcal A$. To do that, denote by $\{b_1,\dots,b_k\}$ the complement of $\mathcal{A}$ in $\{1,2,\dots,B\}$, i.e., $\{b_1,\dots,b_k\}=\{1,2,\dots,B\}\setminus \mathcal A$ with $k\leq B$, and define
\begin{equation*}
    Q'(y_1,\dots,y_{k})=\sum_{i=1}^k b_i y_i^m +\sum_{1\leq i<j\leq k} \delta y_i^2 y_j^{m-2}
\end{equation*}
for some $\delta>B.$ Now, consider the form 
\begin{equation*}
    Q= Q_B \perp Q'.
\end{equation*}
Clearly, $Q$ is positive definite and has rank $(B+1)g(m)+k\leq (B+1)g(m)+B<(B+1)(g(m)+1)$. 

It is clear that $Q$ represents $\Z_{\geq 0}\setminus \mathcal{A},$ because $Q_B$ represents all integers $\geq B$ and $Q'$ represents all the integers $\leq B$ which are not in $\mathcal{A}.$

Suppose now for contradiction that $Q$ represents some $a\in\mathcal{A}$.
As $a$ is represented by the orthogonal sum $Q= Q_B \perp Q'$, we have $B\geq a=u+v$ where $u$ is represented by $Q_B$ and $v$ is represented by $Q'$. However, the only integer $u\leq B$ that is represented by $Q_B$ is $u=0$, and so $a=v$ is represented by $Q'$. 

Now we proceed as in the proof of Proposition \ref{15h}:
$Q'(y_1,\dots,y_k)=a$ for some  $y_1,\dots,y_k\in \Z$. If there exist $i<j$ such that $(y_i,y_j)\neq (0,0)$, then $B\geq a=Q'(y_1,\dots,y_k)>\delta y_i^2 y_j^{m-2}>B,$ which is impossible.

Hence there exists a unique $i$ such that $y_i\neq 0$. But then we have
$$b_i y_i^m=Q'(0,0,\dots,y_i, \dots 0)=a\in \mathcal{A}.$$
Thus by the assumption, we have $b_i\in \mathcal{A},$ which contradicts the choice of $b_i\in\{1,2,\dots,B\}\setminus \mathcal A.$ 
\end{proof}

\section{Geometry of numbers estimates}\label{s5}
We 
begin this section by recalling several facts about sums of $m$th powers and Waring's problem that we then use in several lemmas that help us to represent totally positive algebraic integers with large norms. Towards the end, we establish the existence of universal higher degree forms.

Throughout this section, we fix a totally real number field $K$ of degree $d$ and an even positive integer $m$.

We will use the following result concerning Waring's problem over number fields (see \cite[Page $137,$ Paragraph $2$]{ramanujam}). Recall that $\sum\mathcal{O}_K^m$ denotes the subring of $\mathcal{O}_K$ generated by $m$th powers of elements of $\mathcal{O}_K$.

\begin{theorem} \label{t12}
   Let $K$ be a totally real number field. There exist constants $P=P_K(m)$ and $G=G_K(m)\leq \max (8m^5, 2^m+1)$ such that every totally positive $\alpha\in\sum\co_K^m$ with $N(\alpha)>P$ can be written as the sum of at most $G$ $m$th powers of totally positive integers in $K.$
\end{theorem}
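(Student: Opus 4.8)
Since Theorem~\ref{t12} is quoted from the literature, the point is to recall \emph{how} one proves it: by the Hardy--Littlewood circle method over the number field $K$, in the style of Siegel \cite{siegel4}, with the uniform bookkeeping of Birch \cite{birch} and Ramanujam \cite{ramanujam}. Fix a totally positive $\alpha\in\sum\co_K^m$ and put $N=N(\alpha)$. Via the Minkowski embedding, $\co_K$ becomes a full lattice in $K_\infty\cong\R^d$, and the trace form supplies additive characters $x\mapsto e(\mathrm{Tr}(\theta x))$, with a fundamental domain $\mathfrak F$ for the dual lattice $\co_K^{\vee}$ playing the role of $\R/\Z$. One introduces the Weyl sum
\[
S(\theta)=\sum_{x\in\mathcal B}e\bigl(\mathrm{Tr}(\theta x^m)\bigr),
\]
where $\mathcal B\subset\co_K$ is a box contained in the totally positive cone whose $i$-th side has length $\asymp\sigma_i(\alpha)^{1/m}$, so that $\#\mathcal B\asymp N^{1/m}$ and every $x\in\mathcal B$ is totally positive. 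Then
\[
r(\alpha)=\int_{\mathfrak F}S(\theta)^{G}\,e\bigl(-\mathrm{Tr}(\theta\alpha)\bigr)\,d\theta
\]
counts the representations $\alpha=x_1^m+\dots+x_G^m$ with all $x_j\in\mathcal B$, and the aim is to prove $r(\alpha)>0$ once $N>P_K(m)$ and $G\ge G_K(m)$.

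The \emph{major arc} analysis comes first. Dissect $\mathfrak F$ into neighbourhoods of fractions $a/q$ with $a,q\in\co_K$ and $|N(q)|$ at most a small power of $N$. On such an arc, $S(\theta)$ is well approximated by a product of a (suitably normalized) complete exponential sum over $\co_K/(q)$ and an archimedean oscillatory integral; summing these contributions yields a main term of the shape $\mathfrak S(\alpha)\,\mathfrak J(\alpha)\,N^{G/m-1}$. The singular integral satisfies $\mathfrak J(\alpha)>0$ because $\mathcal B$ sits in the totally positive cone and $N>P_K(m)$ leaves, at every real place, enough room for the corresponding real Waring problem to be solvable with positive density; and each local density $\mathfrak S_{\mathfrak p}$ in $\mathfrak S=\prod_{\mathfrak p}\mathfrak S_{\mathfrak p}$ is positive exactly when $\alpha$ is a sum of $m$th powers in $\co_{K,\mathfrak p}$, which holds for every $\mathfrak p$ precisely because $\alpha\in\sum\co_K^m$ (with the norm condition absorbing the finitely many primes dividing $m$ or the discriminant, via Ramanujam's analysis of $m$th powers in $\mathfrak p$-adic rings). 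The genuinely delicate point is the \emph{uniform} positivity $\mathfrak S(\alpha)\,\mathfrak J(\alpha)\gg_{K}1$, and it is here that $P_K(m)$ is forced to depend on $K$ through its discriminant, ramification, and unit group.

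Next, the \emph{minor arcs}: on the complement of the major arcs one needs a Weyl-type saving $|S(\theta)|\ll N^{(1-\rho)/m}$ for some $\rho=\rho(m)>0$, obtained by Weyl differencing carried out simultaneously in all $d$ archimedean coordinates (a Hua/van der Corput inequality over $K$), combined with a divisor-type count of lattice points with $x^m$ in a fixed residue class. Feeding this into the integral, the minor-arc contribution is $o\bigl(N^{G/m-1}\bigr)$ as soon as $G$ is large enough, and how large is exactly what produces the stated threshold: the crude Weyl estimate needs $G$ of order $2^m$, whereas the Vinogradov--Birch estimates need only $G$ polynomial in $m$ of order $m^5$, and $G_K(m)\le\max(8m^5,\,2^m+1)$ simply records whichever wins for the given $m$. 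With such a $G$ the main term dominates, so $r(\alpha)\gg_{K}N^{G/m-1}>0$, and in particular $r(\alpha)\ge1$.

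The main obstacle is not the minor arcs --- those are routine, if technical, Weyl estimates --- but the uniform positivity of the product of local factors. Two features make this hard and are precisely what the hypotheses encode: at the archimedean places one must be able to choose the $x_j$ \emph{totally positive}, which ties the geometry of the positive cone to the size of $N(\alpha)$ and is the real origin of the bound $P_K(m)$; and at the finitely many non-archimedean places dividing $m$ or the discriminant, $\sum\co_{K,\mathfrak p}^m$ can be a proper subring of $\co_{K,\mathfrak p}$, so one restricts to $\alpha\in\sum\co_K^m$ and lets $N(\alpha)$ be large to control the remaining defect. For the same reasons $P_K(m)$ cannot be made independent of $K$, in contrast to $G_K(m)$. (One cannot shortcut the argument by descending to $\Z$ via the trace: that would destroy both the totally positive constraint on the $x_j$ and the uniformity in the number of variables.)
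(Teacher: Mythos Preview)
The paper does not prove Theorem~\ref{t12} at all: it is stated as a quotation from the literature, with a pointer to \cite[p.~137]{ramanujam} (and, in the introduction, to Siegel \cite{siegel4} and Birch \cite{birch}). So there is no ``paper's own proof'' to compare against; the theorem is used as a black box.

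Your write-up is therefore not a competing proof but an expository sketch of what those references do, and as such it is broadly accurate: the circle method over $K$ with Siegel's generalized Weyl sums, major arcs yielding $\mathfrak S(\alpha)\mathfrak J(\alpha)N^{G/m-1}$, the hypothesis $\alpha\in\sum\co_K^m$ feeding the non-archimedean local densities, total positivity and the norm bound $N(\alpha)>P$ handling the archimedean places, and minor-arc savings via Weyl/Vinogradov differencing. Two small corrections on attribution: the bound $G_K(m)\le 2^m+1$ is Birch's \cite[Theorem~2]{birch}, while the polynomial bound $8m^5$ is Ramanujam's \cite[Proposition~3]{ramanujam}, not a ``Vinogradov--Birch'' estimate; the $\max$ in the statement simply takes whichever is smaller for the given $m$. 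Also, your sketch is a plan rather than a proof---each of the steps (uniform lower bound on $\mathfrak S\mathfrak J$, the Weyl inequality over $K$, the mean-value input) is a substantial piece of work in the cited papers---so if this were meant to stand alone it would need those details filled in. For the purposes of the present paper, though, a citation is all that is required.
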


Note that $\sum\mathcal{O}_K^m$ has finite index as an additive subgroup in $\mathcal{O}_K.$ To see this, observe that $m!\mathcal{O}_K\subset\sum\mathcal{O}_K^m\subset\mathcal{O}_K$ thanks to the identity
$m!x = \sum_{k=0}^{m-1}(-1)^{m-1-k}{n-1\choose k}\left((x+k)^m-k^m\right)$, see, e.g., \cite[Page $142$]{bateman}. 
So there is a finite set of representatives of classes of $\mathcal{O}_K$ modulo $\sum\mathcal{O}_K^m$. Further, we can add elements of $\Z_{\geq 0}\subset\sum\mathcal{O}_K^m$ to the representatives to assume that all the representatives are totally positive.

\begin{convention}
For the rest of article, let us fix (additive group) representatives $\theta_1, \theta_2, \dots, \theta_r\in\co_K^+$ for $\mathcal{O}_K /\sum\mathcal{O}_K^m$.
\end{convention}

Consider the subgroup $$\mathcal{O}_K^{\times m}=\left\{\varepsilon^m \mid \varepsilon\in\mathcal{O}_K^{\times} \right\}\subset \mathcal{O}_K^{\times, +}.$$ 
Let $\mathcal{G}$ be a \textit{compact} fundamental domain for the action of $\sigma(\mathcal{O}_K^{\times m})$ on  $$U^+= \left\{(x_1, x_2,\dots, x_d)\in\mathbb{R}^{d,+} \mid \prod_{i=1}^d x_i=1\right\},$$ i.e., $\mathcal{G}$ is a compact subset of $\mathbb{R}^{d,+}$ such that for each $x\in U^+$ there is $\varepsilon^m\in \mathcal{O}_K^{\times m}$ with $\sigma(\varepsilon^m)x\in\mathcal G$.

Further, let $\mathcal{F}=\R^+\cdot \mathcal{G}$. This is a fundamental domain for the action of $\sigma(\mathcal{O}_K^{\times m})$ on the totally positive orthant $\mathbb{R}^{d,+}$, i.e., 
for each $x\in \mathbb{R}^{d,+}$ there is $\varepsilon^m\in \mathcal{O}_K^{\times m}$ with $\sigma(\varepsilon^m)x\in\mathcal F$.

In fact, $\mathcal{F}$ and $\mathcal G$ can be described quite explicitly by  Shintani's unit theorem \cite[Theorem $9.3$]{neukirch}, but we will not need this.

\begin{convention}
For the rest of article, we fix some fundamental domains $\mathcal{F}$ and $\mathcal G$ as above.
\end{convention}

\begin{lemma}\label{l18}\label{l17}
There is a constant $0<c<1$ with the following property: 

For every $\beta\in\mathcal{O}_K^+$ there exists $\varepsilon\in\mathcal{O}_K ^{\times}$ such that

a) $\beta \varepsilon^m\succ \lfloor c N(\beta)^{1/d}\rfloor$, and 
   
b) $N(\beta\varepsilon^m - n_{X})>c^d N(\beta)$ if $N(\beta)>X$, where $X\in\R^+$ and $n_X=\lfloor c X^{1/d}\rfloor$.
\end{lemma}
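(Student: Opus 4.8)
The plan is to use the compact fundamental domain $\mathcal{G}$ (and $\mathcal{F}=\R^+\cdot\mathcal{G}$) to replace $\beta$ by an associate $\beta\varepsilon^m$ all of whose conjugates are within a fixed bounded factor of $N(\beta)^{1/d}$, and then read off both (a) and (b) from that single estimate. First I would, given $\beta\in\mathcal{O}_K^+$, choose $\varepsilon\in\mathcal{O}_K^{\times}$ with $\sigma(\beta\varepsilon^m)\in\mathcal{F}$; this exists because $\mathcal{F}$ is a fundamental domain for the action of $\sigma(\mathcal{O}_K^{\times m})$ on $\R^{d,+}$ and $\sigma(\varepsilon^m)\sigma(\beta)=\sigma(\beta\varepsilon^m)$. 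Since $m$ is even, $N(\varepsilon^m)=N(\varepsilon)^m=1$, hence $N(\beta\varepsilon^m)=N(\beta)$. Writing $\sigma(\beta\varepsilon^m)=t\cdot g$ with $t\in\R^+$ and $g=(g_1,\dots,g_d)\in\mathcal{G}\subseteq U^+$, and taking the product of all coordinates, I get $N(\beta\varepsilon^m)=t^d\prod_i g_i=t^d$, so $t=N(\beta)^{1/d}$ and therefore $\sigma_i(\beta\varepsilon^m)=N(\beta)^{1/d}g_i$ for every $i$. The same $\varepsilon$ will serve for both parts.

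Next I would extract the uniform constant from compactness. The map $x\mapsto\min_i x_i$ is continuous and strictly positive on the open orthant $\R^{d,+}\supseteq\mathcal{G}$, and $\mathcal{G}$ is compact, so it attains a positive minimum $c_1>0$ on $\mathcal{G}$; moreover $c_1\le 1$, since every point of $\mathcal{G}$ has coordinate-product $1$ and hence minimal coordinate $\le 1$. I then set $c=c_1/2\in(0,1)$. With this choice, for all $i$,
$$\sigma_i(\beta\varepsilon^m)=N(\beta)^{1/d}g_i\ \ge\ c_1 N(\beta)^{1/d}\ =\ 2c\,N(\beta)^{1/d}.$$

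For (a): since $2c\,N(\beta)^{1/d}>c\,N(\beta)^{1/d}\ge\lfloor cN(\beta)^{1/d}\rfloor$ and $N(\beta)>0$, every conjugate of $\beta\varepsilon^m$ strictly exceeds the rational integer $\lfloor cN(\beta)^{1/d}\rfloor$, i.e.\ $\beta\varepsilon^m\succ\lfloor cN(\beta)^{1/d}\rfloor$; this remains fine in the harmless edge case where the floor is $0$, as $\beta\varepsilon^m$ is totally positive. For (b), assume $N(\beta)>X$. Since $n_X=\lfloor cX^{1/d}\rfloor$ is a rational integer, $\sigma_i(\beta\varepsilon^m-n_X)=N(\beta)^{1/d}g_i-n_X\ge 2c\,N(\beta)^{1/d}-cX^{1/d}>2c\,N(\beta)^{1/d}-c\,N(\beta)^{1/d}=c\,N(\beta)^{1/d}>0$, using $X^{1/d}<N(\beta)^{1/d}$. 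Multiplying these $d$ positive estimates gives $N(\beta\varepsilon^m-n_X)=\prod_i\sigma_i(\beta\varepsilon^m-n_X)>c^d N(\beta)$, which is exactly (b).

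I do not expect a serious obstacle: the only real content is the compactness of $\mathcal{G}$ (which produces the uniform $c_1$), together with the observation that $c_1-c=c$ when $c=c_1/2$, which is precisely what lets one constant $c$ work for both inequalities. The things to watch are the strictness of the inequalities (the edge case in (a) with floor $0$), the fact that $\lfloor cN(\beta)^{1/d}\rfloor$ and $n_X$ are rational integers so they act the same under every embedding, and that evenness of $m$ is what guarantees $N(\varepsilon^m)=1$ and hence $N(\beta\varepsilon^m)=N(\beta)$.
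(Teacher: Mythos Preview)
Your proposal is correct and follows essentially the same route as the paper: both use compactness of $\mathcal{G}$ to extract a uniform lower bound $c_1$ (the paper calls it $\ell$) for the minimum coordinate on $\mathcal{G}$, set $c$ to be roughly half of it, and then read off (a) and (b) from the single estimate $\sigma_i(\beta\varepsilon^m)\ge 2c\,N(\beta)^{1/d}$. Your version is in fact slightly more explicit (noting $c_1\le 1$ via the product-one condition, and handling the floor-zero edge case), but the argument is the same.
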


\begin{proof}
First of all, let us prove that there is a constant $0<c<1$ such that for all $(x_1,\dots,x_d)\in\mathcal{F},$ we have 
    \begin{equation}\label{eq:F}
        x_i>2c(x_1\cdots x_d)^{1/d}\text{ for all }1\leq i\leq d.
    \end{equation}

To establish this, consider the map ${\bf \pi} : \mathcal{G} \rightarrow \mathbb{R}^{+},  (g_1, g_2,\dots, g_d) \mapsto {\min}(g_1,\dots,g_d).$
   Since $\mathcal{G}$ is compact and $\pi$ is continuous, it has a minimum $\ell=\min\{\pi(g) \mid g\in\mathcal{G}\}\in\R^+$.

Let now $(x_1,\dots,x_d)\in\mathcal{F}$ and denote $N=(x_1\cdots x_d)^{1/d}$.
   Then $(x_1,\dots,x_d)=(g_1,\dots,g_d)N$ for some $(g_1,\dots,g_d)\in\mathcal{G}.$ But then it follows that $x_i=g_i N\geq \ell N$, and so \eqref{eq:F} holds for any $2c<\ell$. As $\ell$ is positive, we can choose $c$ sufficiently small so that $0<c<1$ as desired.

   \medskip

By the definition of $\mathcal F$, for every $\beta\in\mathcal{O}_K^+$ there is an $\varepsilon\in\mathcal{O}_K^{\times}$ such that $\sigma(\beta\varepsilon^m)\in\mathcal{F}.$ From \eqref{eq:F} it follows that 
    \begin{equation*}\label{e1}
        \sigma_i(\beta\varepsilon^m)>2cN(\beta)^{1/d}\geq \lfloor c N(\beta)^{1/d}\rfloor
    \end{equation*}
    for all $1\leq i \leq d.$ This immediately implies part a), $\beta \varepsilon^m\succ \lfloor c N(\beta)^{1/d}\rfloor$.
    
   \medskip

    Assume now $N(\beta)>X$ and let $n_X=\lfloor cX^{1/d}\rfloor$. We have
    \begin{equation*}
        \sigma_i(\beta\varepsilon^m) - n_X>2cN(\beta)^{1/d}-c N(\beta)^{1/d}=cN(\beta)^{1/d}.
    \end{equation*}
    Consequently, we have 
   \begin{equation*}
   N(\beta\varepsilon^m -n_X)=\prod_{i=1}^{d}\left(\sigma_i(\beta\varepsilon^m )-n_X\right) > c^dN(\beta).\qedhere
   \end{equation*}
\end{proof}

Recall that in the following lemma, the elements $\theta_1, \theta_2, \dots, \theta_r\in\co_K^+$ denote the representatives for $\mathcal{O}_K /\sum\mathcal{O}_K^m$, as introduced above.

\begin{lemma}\label{l20}\label{l19}
Let $n\in\Z_{\geq 0}$. There is a positive integer $M=M_n\geq n^d$ such that for every $\beta\in \mathcal{O}_K^+$ with $N(\beta)>M,$
      there exist $\varepsilon\in\mathcal{O}_K^{\times}$ and $j\in\{1,2,\dots,r\}$ such that 
      $\beta\varepsilon^m=n+\theta_j+\gamma$ for some $0\prec \gamma\in\sum\mathcal{O}_K^m$ that satisfies $N(\gamma)>c^dN(\beta)$, where $c>0$ is the constant from Lemma $\ref{l17}$.
\end{lemma}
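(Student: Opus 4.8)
The plan is to combine Lemma \ref{l17} with the finite-index fact for $\sum\co_K^m$ and the Waring bound of Theorem \ref{t12}. First I would apply Lemma \ref{l17}a) to replace $\beta$ by $\beta\varepsilon^m$ for a suitable unit $\varepsilon$, so that $\beta\varepsilon^m \succ \lfloor cN(\beta)^{1/d}\rfloor$; note $N(\beta\varepsilon^m)=N(\beta)$ since $N(\varepsilon^m)=1$. Now look at the class of $\beta\varepsilon^m - n$ modulo $\sum\co_K^m$: there is some $j\in\{1,\dots,r\}$ with $\beta\varepsilon^m - n - \theta_j \in \sum\co_K^m$. Call this element $\gamma$, so that $\beta\varepsilon^m = n+\theta_j+\gamma$ with $\gamma\in\sum\co_K^m$ by construction. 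It remains to (i) choose $M$ large enough that $\gamma$ is totally positive, and (ii) get the lower bound $N(\gamma)>c^dN(\beta)$; both will follow from making $N(\beta)$ large, where "large" is allowed to depend on $n$ (and on the fixed data $K$, $m$, $c$, and the finitely many $\theta_j$).

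For the totally positive and large-norm conclusions, the key point is that by Lemma \ref{l17}a) each embedding satisfies $\sigma_i(\beta\varepsilon^m) > 2cN(\beta)^{1/d}$, so
\begin{equation*}
\sigma_i(\gamma) = \sigma_i(\beta\varepsilon^m) - n - \sigma_i(\theta_j) > 2cN(\beta)^{1/d} - n - \house{\theta_j}.
\end{equation*}
Since the finitely many $\theta_j$ give a uniform bound $\Theta := \max_j \house{\theta_j}$, once $2cN(\beta)^{1/d} > n+\Theta$ — i.e. $N(\beta)$ exceeds a threshold depending only on $n$ and the fixed data — we get $\sigma_i(\gamma)>0$ for all $i$, hence $\gamma\succ 0$. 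To upgrade to $N(\gamma)>c^dN(\beta)$, I would instead invoke Lemma \ref{l17}b) with $X = M$ and $n_X = n$: this is the reason the lemma statement of \ref{l17} was phrased with a free parameter $X$ and $n_X=\lfloor cX^{1/d}\rfloor$. Concretely, choosing $M$ so that $\lfloor cM^{1/d}\rfloor \geq n + \Theta$ (which also forces $M\geq n^d$, as required, after possibly enlarging $M$), Lemma \ref{l17}b) gives $N(\beta\varepsilon^m - n_X) > c^dN(\beta)$; and since $\sigma_i(\gamma) = \sigma_i(\beta\varepsilon^m) - n_X - \sigma_i(\theta_j) + (n_X - n)$ with $n_X\geq n$ and $\sigma_i(\theta_j)$ controlled, one checks $\sigma_i(\gamma) \geq \sigma_i(\beta\varepsilon^m) - n_X$ fails in general — so instead I would argue directly: $\sigma_i(\gamma) = \sigma_i(\beta\varepsilon^m) - (n+\sigma_i(\theta_j))$ and $n + \sigma_i(\theta_j) \leq n+\Theta \leq \lfloor cM^{1/d}\rfloor \leq \lfloor cN(\beta)^{1/d}\rfloor = n_{N(\beta)}$, so $\sigma_i(\gamma) \geq \sigma_i(\beta\varepsilon^m) - n_{N(\beta)}$, and then $N(\gamma) \geq N(\beta\varepsilon^m - n_{N(\beta)}) > c^dN(\beta)$ by applying Lemma \ref{l17}b) with $X = N(\beta)$ itself.

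The main obstacle, and the place to be careful, is the bookkeeping that simultaneously makes $\gamma$ totally positive, forces $N(\gamma)>c^dN(\beta)$, and respects $M\geq n^d$ — the subtlety is that $n$ and the $\theta_j$-shift must both be dominated by $\lfloor cN(\beta)^{1/d}\rfloor$, and one has to be sure the threshold $M$ can be taken to depend only on $n$ (the $\theta_j$, $c$, $d$ being fixed once and for all). I expect the cleanest route is: set $\Theta=\max_j\house{\theta_j}$, pick $M=M_n$ with $\lfloor cM^{1/d}\rfloor \geq n+\lceil\Theta\rceil$ and $M\geq n^d$, then for $N(\beta)>M$ run the argument of the previous paragraph using Lemma \ref{l17}(a),(b) with $X=N(\beta)$, $n_X=\lfloor cN(\beta)^{1/d}\rfloor$, noting $n+\sigma_i(\theta_j)\leq n_X$ so that $0\prec\gamma$ and $\gamma\succeq \beta\varepsilon^m - n_X$ componentwise, whence $N(\gamma)>c^dN(\beta)$; membership $\gamma\in\sum\co_K^m$ is automatic from the choice of $\theta_j$ together with $n\in\Z_{\geq0}\subset\sum\co_K^m$. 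Note that Theorem \ref{t12} is not actually needed for \emph{this} lemma — it will be used in the next step, where one feeds the large-norm $\gamma\in\sum\co_K^m$ into Waring's problem — so I would not invoke it here.
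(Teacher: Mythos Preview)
Your approach is essentially the paper's: choose $M$ so that $n_M=\lfloor cM^{1/d}\rfloor$ dominates $n+\max_j\house{\theta_j}$, apply Lemma~\ref{l17}(a) to get $\varepsilon$, pick $\theta_j$ so that $\gamma:=\beta\varepsilon^m-n-\theta_j\in\sum\co_K^m$, and then bound $\sigma_i(\gamma)\geq\sigma_i(\beta\varepsilon^m)-n_M$ embedding-wise to feed into Lemma~\ref{l17}(b). Your remark that Theorem~\ref{t12} is not needed here is also correct.

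There is one small slip to fix. In your final paragraph you invoke Lemma~\ref{l17}(b) ``with $X=N(\beta)$ itself''; but the hypothesis of part (b) is the \emph{strict} inequality $N(\beta)>X$, which fails for $X=N(\beta)$. The repair is exactly what the paper does: apply part (b) with $X=M$. You have already arranged $n+\sigma_i(\theta_j)\leq n+\Theta\leq \lfloor cM^{1/d}\rfloor=n_M$, so $\sigma_i(\gamma)\geq\sigma_i(\beta\varepsilon^m)-n_M$, and since $N(\beta)>M$ Lemma~\ref{l17}(b) gives $N(\beta\varepsilon^m-n_M)>c^dN(\beta)$, hence $N(\gamma)>c^dN(\beta)$. (A second, cosmetic point: the inequality $\sigma_i(\beta\varepsilon^m)>2cN(\beta)^{1/d}$ you quote is established inside the \emph{proof} of Lemma~\ref{l17}, not in its statement; either cite the proof or, as above, avoid it by working with $n_M$ directly.)
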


\begin{proof}
Let $M\in\Z_{\geq n^d}$ be such that $n_M=\lfloor c M^{1/d}\rfloor>n+\max_{1\leq i\leq r}{\house{\theta_i}}$, where $c$ is the constant from Lemma \ref{l17}. Note that $n_M\succ n+\theta_i\succ 0$ for all $i$.

By Lemma \ref{l18} a), for every $\beta\in\mathcal{O}_K^+$ with $N(\beta)>M$ there exists $\varepsilon\in\mathcal{O}_K^{\times}$ such that 
\begin{equation}\label{eq:2}
\beta\varepsilon^m\succ\lfloor c N(\beta)^{1/d}\rfloor\succeq \left\lfloor c M^{1/d}\right\rfloor\succ n+\theta_i    
\end{equation} for all $i$.

As $\theta_1, \theta_2, \dots, \theta_r\in\co_K^+$ form the representatives for $\mathcal{O}_K /\sum\mathcal{O}_K^m$, there is $j$ such that $\beta\varepsilon^m-\theta_j\in\sum\mathcal{O}_K^m$. 
As $n=1+\dots+1\in\sum\mathcal{O}_K^m$, we also have $\gamma=\beta\varepsilon^m-\theta_j-n\in\sum\mathcal{O}_K^m$.
By \eqref{eq:2}, we have $\gamma\succ 0$.

Finally, using $n_M\succ n+\theta_i$ for all $i$ and  Lemma \ref{l18} b), we have
\begin{align*}
N(\gamma)=N(\beta\varepsilon^m-\theta_j-n)=&\prod_{i=1}^d \left(\sigma_i(\beta\varepsilon^m)-\sigma_i(\theta_j+n)\right)\\ 
>&\prod_{i=1}^d \left(\sigma_i(\beta\varepsilon^m)-n_M\right)=
N(\beta\varepsilon^m - n_{M})>c^d N(\beta),    
\end{align*}
as desired.
\end{proof}

\begin{lemma}\label{l21}
    There exists a positive integer $L\geq M_0$ such that the totally positive definite $m$-ic form $$Q_1\left(x_1,\dots,x_r,y_1,\dots,y_{G}\right)=\sum_{i=1}^r\theta_ix_i^m + \sum_{i=1}^{G}y_i^m$$ represents every $\beta\in\mathcal{O}_K^+$ with $N(\beta)>L$, where $M_0$ is the integer from Lemma $\ref{l20}$ for $n=0$.
\end{lemma}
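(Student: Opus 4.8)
I would prove Lemma \ref{l21} by combining Lemma \ref{l20} (applied with $n=0$) with the number-field version of Waring's theorem, Theorem \ref{t12}. The idea is that Lemma \ref{l20} lets me strip off a representative $\theta_j$ from $\beta\varepsilon^m$ (up to an $m$th power of a unit) so that the remainder lies in $\sum\co_K^m$ and, crucially, has \emph{large norm}; then Theorem \ref{t12} writes that remainder as a bounded sum of $m$th powers, which is exactly what the $y_i$-block of $Q_1$ provides.

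\medskip

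\noindent\textbf{Key steps.} First I would invoke Lemma \ref{l20} with $n=0$ to obtain the integer $M_0$ with the stated properties; for any $\beta\in\co_K^+$ with $N(\beta)>M_0$ there are $\varepsilon\in\co_K^\times$ and $j\in\{1,\dots,r\}$ with $\beta\varepsilon^m=\theta_j+\gamma$, where $0\prec\gamma\in\sum\co_K^m$ and $N(\gamma)>c^dN(\beta)$. Next, recall from Theorem \ref{t12} the constants $P=P_K(m)$ and $G=G_K(m)$; I then set $L=\max\{M_0,\ \lceil P/c^d\rceil\}$ (rounding up so that $L$ is a positive integer $\geq M_0$), so that $N(\beta)>L$ forces $N(\gamma)>c^dN(\beta)>c^d L\geq P$. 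Hence $\gamma$ satisfies the hypothesis of Theorem \ref{t12} and can be written as $\gamma=z_1^m+\dots+z_G^m$ with each $z_i$ a totally non-negative integer of $K$ (the sum having at most $G$ terms; pad with zeros if fewer). Plugging $x_i=0$ for $i\neq j$, $x_j=1$, and $y_i=z_i$ into $Q_1$ gives
\begin{equation*}
Q_1(0,\dots,0,1,0,\dots,0,z_1,\dots,z_G)=\theta_j+\sum_{i=1}^G z_i^m=\theta_j+\gamma=\beta\varepsilon^m.
\end{equation*}
Finally, by homogeneity of the $m$-ic form, multiplying each argument of this representation by $\varepsilon^{-1}\in\co_K^\times$ scales the value by $\varepsilon^{-m}$, so $Q_1$ also represents $\beta\varepsilon^m\cdot\varepsilon^{-m}=\beta$. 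I would also note at the outset that $Q_1$ is totally positive definite: under every embedding each $\theta_i$ maps to a positive real and each monomial $x_i^m,y_i^m$ has even exponent, so $\sigma(Q_1)$ is a positive sum of even powers, vanishing only at the origin.

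\medskip

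\noindent\textbf{Main obstacle.} There is no serious obstacle — the lemma is essentially a packaging of the two previous results — but the one point needing care is the interface with Theorem \ref{t12}: that theorem produces a representation of $\gamma$ by \emph{totally non-negative} integers, and one must check this is compatible with representation by $Q_1$ over $\co_K^n$ (it is, since $0$ and totally positive elements are all in $\co_K$), and that "at most $G$" summands can be uniformized to exactly $G$ by appending zero summands, which is harmless because $0^m=0$. The only genuine bookkeeping is choosing $L$ large enough to simultaneously dominate $M_0$ and to guarantee $c^d N(\beta)>P$; once $L=\max\{M_0,\lceil P/c^d\rceil\}$ is fixed, everything else is immediate.
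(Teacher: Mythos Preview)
Your proposal is correct and follows essentially the same approach as the paper: choose $L$ to dominate both $M_0$ and $c^{-d}P$, apply Lemma \ref{l20} with $n=0$ to write $\beta\varepsilon^m=\theta_j+\gamma$ with $N(\gamma)>P$, and invoke Theorem \ref{t12} on the remainder. The only cosmetic difference is that the paper multiplies through by $\varepsilon^{-m}$ first (writing $\beta=\theta_j\varepsilon^{-m}+\gamma\varepsilon^{-m}$ and applying Theorem \ref{t12} to $\gamma\varepsilon^{-m}$, then setting $x_j=\varepsilon^{-1}$), whereas you represent $\beta\varepsilon^m$ and then appeal to homogeneity; these are equivalent since units have norm $\pm1$ and $\sum\co_K^m$ is closed under multiplication by $m$th powers.
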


\begin{proof}
    Since all $\theta_i$s are totally positive and $m$ is even,  $Q_1$ is totally positive definite. 
Set $L=\lceil\max(M_0, c^{-d}P)\rceil,$ where $c$ and $P$ are from Lemma \ref{l18} and Theorem \ref{t12}.     

    For any $\beta\in\mathcal{O}_K^+$ with $N(\beta)>L$, by Lemma \ref{l20} with $n=0$, there exist $\varepsilon\in\mathcal{O}_K^{\times}$ and $j\in\{1,2,\dots,r\}$ such that $\beta\varepsilon^m=\theta_j+\gamma$ with $0\prec\gamma\in \sum\mathcal{O}_K^m.$ 
    We then have $\beta=\theta_j\varepsilon^{-m}+\gamma\varepsilon^{-m}$ and $\gamma\varepsilon^{-m}\succ 0$.
    
    By Lemma \ref{l20} we have $N(\gamma\varepsilon^{-m})>c^dN(\beta)>c^dL\geq P.$ Now, it follows from Theorem \ref{t12} that $\gamma\varepsilon^{-m}$ is represented by the sum of $m$th powers $\sum_{i=1}^{G}y_i^m.$ Thus, by putting $x_k=\varepsilon^{-1}$ and $x_i=0$ for all $i\neq k$, we see that $Q_1$ represents $\beta$.
\end{proof}

We are now prepared to prove the existence of a universal $m$-ic form.

\begin{theorem}\label{p22}
    Given a totally real number field $K$ and an even positive integer $m>2,$ there exists a universal $m$-ic form over $K$. 
\end{theorem}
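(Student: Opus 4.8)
The plan is to build the universal form as an orthogonal sum $Q=Q_1\perp Q_2$, where $Q_1$ is the form from Lemma \ref{l21} (which already represents every $\beta\in\mathcal{O}_K^+$ of large enough norm) and $Q_2$ is an auxiliary diagonal form designed to catch the totally positive integers of small norm. The key observation that makes this work is that an $m$-ic form is homogeneous of degree $m$, so if it represents $\beta$ then it represents $\beta\varepsilon^m$ for every unit $\varepsilon\in\mathcal{O}_K^\times$; hence it suffices to make $Q_2$ represent one representative from each of the small-norm classes modulo $\mathcal{O}_K^{\times m}$, and the point is that there are only finitely many such classes.

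Concretely, first I would fix the integer $L\ge M_0$ furnished by Lemma \ref{l21}, so that $Q_1$ represents every $\beta\in\mathcal{O}_K^+$ with $N(\beta)>L$, and then consider the set
$$S=\{\,\beta\in\mathcal{O}_K \mid \sigma(\beta)\in\mathcal{F}\ \text{ and }\ N(\beta)\le L\,\}.$$
Since $\mathcal{F}\subset\mathbb{R}^{d,+}$, every element of $S$ is totally positive. The step requiring a small argument is that $S$ is finite: writing $\mathcal{F}=\mathbb{R}^+\cdot\mathcal{G}$ with $\mathcal{G}$ compact, the product of the coordinates is bounded away from $0$ on $\mathcal{G}$, so the constraint $\prod_i x_i\le L$ confines $x\in\mathcal{F}$ to a bounded subset of $\mathbb{R}^d$; as $\sigma(\mathcal{O}_K)$ is a full lattice in $\mathbb{R}^d$, only finitely many of its points can lie in that bounded set. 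Writing $S=\{\beta_1,\dots,\beta_s\}$, I then set
$$Q_2(z_1,\dots,z_s)=\sum_{i=1}^s \beta_i z_i^m,$$
which is totally positive definite because each $\beta_i$ is totally positive and $m$ is even, and which represents each $\beta_i$ by substituting $z_i=1$ and all other variables $0$.

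Finally I would take $Q=Q_1\perp Q_2$ and verify that it is universal. Let $\beta\in\mathcal{O}_K^+$. If $N(\beta)>L$, then $Q_1$, and hence $Q$, represents $\beta$. If instead $N(\beta)\le L$, then by the defining property of the fundamental domain $\mathcal{F}$ there is a unit $\varepsilon\in\mathcal{O}_K^\times$ with $\sigma(\beta\varepsilon^m)\in\mathcal{F}$; since $N(\beta\varepsilon^m)=N(\beta)\le L$, we get $\beta\varepsilon^m=\beta_i$ for some $i$, i.e. $\beta=\beta_i\varepsilon^{-m}$. Substituting $z_i=\varepsilon^{-1}$ and the other variables $0$ into $Q_2$ then yields $\beta_i\varepsilon^{-m}=\beta$, so $Q$ represents $\beta$ in this case too; thus $Q$ is universal. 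I do not expect a genuine obstacle in carrying this out: all the real content sits in Lemma \ref{l21} and the geometry-of-numbers estimates preceding it, and the only new ingredient here is the routine finiteness of $S$ together with the homogeneity trick that reduces the bounded-norm elements to finitely many unit classes.
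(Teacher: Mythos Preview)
Your proposal is correct and follows essentially the same approach as the paper: both take $Q=Q_1\perp q$ with $Q_1$ from Lemma~\ref{l21} and $q$ a diagonal form $\sum\alpha_i z_i^m$ over a finite set of representatives of $\{\beta\in\mathcal{O}_K^+:N(\beta)\le L\}$ modulo $\mathcal{O}_K^{\times m}$, then use homogeneity to catch each small-norm element via $z_i=\varepsilon^{-1}$. The only difference is cosmetic: the paper simply asserts the finiteness of representatives, whereas you make the choice explicit via the fundamental domain $\mathcal{F}$ and spell out the compactness argument.
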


\begin{proof}
   Let $L$ be the positive integer and $Q_1$ the $m$-ic form from Lemma \ref{l21}. There are only finitely many totally positive integers with norm $\leq L$, up to multiplication by elements of $\mathcal{O}_K^{\times m}.$ So, let us fix a set of representatives $\{\alpha_1,\alpha_2,\dots,\alpha_s\}$ for them and consider the $m$-ic form $$q(z_1,z_2,\dots,z_s)=\sum_{i=1}^s\alpha_i z_i^m.$$ 
    
    We will prove that the $m$-ic form 
    $$Q=Q_1 \perp q$$
    is universal. 
    Since $Q_1$ and $q$ are totally positive definite, it follows that $Q$ is also totally positive definite, and so it suffices to prove that $Q$
    represents every element of $\mathcal{O}_K^+.$
    
 Lemma \ref{l21} ensures that $Q$ represents all totally positive integers with norm $>L.$

    If $\beta\in\mathcal{O}_K^+$ with $N(\beta)\leq L,$ then $\beta=\alpha_j\varepsilon^m$ for some $j\in\{1,2,\dots,s\}$ and some $\varepsilon\in\mathcal{O}_K^{\times}.$ By setting $z_j=\varepsilon$ and $z_i=0$ for all $i\neq j$, we see that $\beta$ is represented by $q$, and thus also by $Q.$
\end{proof}

\section{Construction of $m$-ic forms over totally real number fields} \label{s6}

Using Theorem \ref{p22} we now establish that there is a form that represents all totally positive integers of sufficiently large norm. 

\begin{proposition}\label{p23}
    Fix a positive integer $L$ from Lemma $\ref{l21}$. For each $B>L,$ there exists a totally positive definite, $m$-ic form $Q_B$ that represents all elements of $\mathcal{O}_K^+$ of norm $>B$ and does not represent any element of norm $\leq B.$
\end{proposition}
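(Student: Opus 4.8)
The plan is to build $Q_B$ as an orthogonal sum of the universal form $Q$ from Theorem \ref{p22} (or of $Q_1$ from Lemma \ref{l21}) with a "scaling" block whose only role is to push up the norms of everything it represents, mimicking the role played by $Q_B$ in the rational case (Proposition \ref{largeelement}). Concretely, the first idea is to take a universal $m$-ic form $Q$ over $K$ — which exists by Theorem \ref{p22} — and to consider, for a suitable totally positive integer $\lambda$ with large norm, the scaled form $\lambda \cdot Q(x_1,\dots,x_n)$, whose representations are exactly $\lambda\cdot(\mathcal{O}_K^+\cup\{0\})$. Since $N(\lambda\mu)=N(\lambda)N(\mu)\geq N(\lambda)$ for all $\mu\in\mathcal{O}_K^+$, such a form represents no nonzero element of norm $<N(\lambda)$; choosing $\lambda$ with $N(\lambda)>B$ already kills small-norm representations. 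The difficulty is the other requirement: $Q_B$ must represent \emph{all} elements of norm $>B$, and a single scaled copy $\lambda Q$ only represents the multiples of $\lambda$.

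To fix this, I would run the same congruence/coset trick used in Lemma \ref{l21} and Proposition \ref{largeelement}: instead of one scaling factor, use a finite family $\lambda_1,\dots,\lambda_t\in\mathcal{O}_K^+$ of totally positive integers, all of norm $>B$, chosen so that every totally positive integer of norm $>B$ can be written (up to units) as $\lambda_i\mu$ for some $i$ and some totally positive $\mu$ with norm large enough to be represented by the universal form $Q$. Then $Q_B:=(\lambda_1 Q)\perp(\lambda_2 Q)\perp\cdots\perp(\lambda_t Q)$ is totally positive definite (each summand is, being a positive multiple of a totally positive definite form), represents only elements of norm $\geq\min_i N(\lambda_i)>B$ among its nonzero values, and represents every element of norm $>B$ by the choice of the $\lambda_i$. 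The cleanest way to produce the $\lambda_i$ is to proceed exactly as in the proof of Lemma \ref{l21} via Lemma \ref{l20}: for $\beta\in\mathcal{O}_K^+$ with $N(\beta)$ large, Lemma \ref{l20} (applied with a suitably large $n$, chosen so that $M_n>B$ and the resulting norm bound beats $L$) gives $\varepsilon\in\mathcal{O}_K^\times$, an index $j$, and $\gamma\in\sum\mathcal{O}_K^m$ with $\beta\varepsilon^m=n+\theta_j+\gamma$ and $N(\gamma)>c^dN(\beta)$ large; this realizes $\beta$ (up to an $m$th power of a unit) as a value of $\sum_i\theta_i x_i^m+\sum_i y_i^m$, and one only needs to arrange the extra factor of norm $>B$, e.g. by working with $n+\theta_j$ large enough or by multiplying the whole block through by a fixed totally positive integer $\nu$ with $N(\nu)>B$ and then adjusting the norm threshold. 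So the statement essentially follows by re-running the argument of Lemma \ref{l21} and Theorem \ref{p22} with all coefficients replaced by $\nu$ times themselves, for a fixed $\nu\in\mathcal{O}_K^+$ with $N(\nu)>B$.

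In more detail, here is the route I would actually write down. Fix $\nu\in\mathcal{O}_K^+$ with $N(\nu)>B$ (such $\nu$ exists, e.g. a large rational integer). Let $Q_1$ be the form from Lemma \ref{l21} and set $Q_1'(x_1,\dots,x_r,y_1,\dots,y_G)=\sum_{i=1}^r(\nu\theta_i)x_i^m+\sum_{i=1}^G\nu\, y_i^m=\nu\cdot Q_1(\mathbf{x},\mathbf{y})$. Then $Q_1'$ represents $\nu\cdot(\text{values of }Q_1)$, hence by Lemma \ref{l21} it represents every $\nu\beta$ with $\beta\in\mathcal{O}_K^+$, $N(\beta)>L$; equivalently, $Q_1'$ represents every $\delta\in\nu\mathcal{O}_K^+$ with $N(\delta)>N(\nu)L$. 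To catch the remaining elements of norm $>B$ that are not in $\nu\mathcal{O}_K^+$, note $\nu\mathcal{O}_K$ has finite index in $\mathcal{O}_K$; pick totally positive coset representatives, multiply each by a large rational integer so they all have norm $>B$, call them $\rho_1,\dots,\rho_u$, and for each one take a scaled universal form $\rho_k\cdot Q$ with $Q$ from Theorem \ref{p22} — each $\rho_k Q$ is totally positive definite, represents only $0$ and elements of norm $\geq N(\rho_k)>B$, and represents all of $\rho_k\mathcal{O}_K^+$. Setting $Q_B:=Q_1'\perp(\rho_1 Q)\perp\cdots\perp(\rho_u Q)$ gives a totally positive definite $m$-ic form; it represents no nonzero element of norm $\leq B$ because every summand's nonzero values have norm $>B$ and norm is multiplicative on the (totally positive) pieces of an orthogonal representation only after one checks that a representation by an orthogonal sum writes the target as a sum of totally non-negative pieces — which holds here since each summand is totally positive definite, so each piece is totally non-negative and its norm, when nonzero, exceeds $B$, forcing the sum to have norm $>B$ unless all but one piece vanish. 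And it represents every element of norm $>B$: given such $\beta$, if $N(\beta)>N(\nu)L$ and...

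I expect the main obstacle to be exactly the last point — showing $Q_B$ represents \emph{every} totally positive integer of norm $>B$, including those of norm only slightly bigger than $B$ but far smaller than the threshold $L$ needed for Lemma \ref{l21}/Theorem \ref{p22} to apply. For those "medium" elements, a single scaled universal form $\rho_k Q$ of large norm won't help (it only hits multiples of $\rho_k$), so one genuinely needs finitely many scaled universal forms indexed by enough data (a coset system modulo some fixed modulus, together with small multipliers) so that every $\beta$ with $B<N(\beta)\leq L'$ lies in one of the represented sets; alternatively, and more simply, one can absorb these finitely many classes directly: the set of totally positive integers of norm in $(B, L']$ is finite up to $\mathcal{O}_K^{\times m}$, so one just appends a block $\sum_i \beta_i z_i^m$ over representatives $\beta_i$ of that finite set — each $\beta_i$ has norm $>B$, so this block is harmless for the lower bound, and it directly represents each such class. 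With that observation the proof is a routine assembly: $Q_B=(\rho_1 Q)\perp\cdots\perp(\rho_u Q)\perp\big(\sum_i\beta_i z_i^m\big)$, where the $\rho_k$ handle norm $>L'$ via the finite-index-coset argument and the $\beta_i$ handle the finitely many classes of norm in $(B,L']$.
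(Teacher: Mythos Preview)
Your overall instinct --- build $Q_B$ as an orthogonal sum $\perp_{\alpha}\,\alpha Q$ of copies of the universal form scaled by finitely many totally positive $\alpha$ with $N(\alpha)>B$ --- is exactly what the paper does. The lower bound is also fine: if $\beta$ is represented, it is a sum of totally non-negative pieces each of which, when nonzero, has norm $>B$; since $\beta\succeq$ any one nonzero piece, $N(\beta)>B$. (Your aside about ``unless all but one piece vanish'' is unnecessary.)

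The genuine gap is in the other direction --- showing that \emph{every} $\beta$ of norm $>B$ is represented --- and none of your three attempts closes it. First, $\nu Q_1$ represents only $\nu\cdot(\text{values of }Q_1)$, i.e.\ multiples of $\nu$, so ``multiplying the whole block through by $\nu$'' cannot reach a $\beta$ that is not divisible by $\nu$. Second, your coset system $\rho_1,\dots,\rho_u$ for $\mathcal{O}_K/\nu\mathcal{O}_K$ mixes additive and multiplicative structure: the block $\rho_kQ$ represents $\rho_k\cdot\mathcal{O}_K^+$, not the additive coset $\rho_k+\nu\mathcal{O}_K$, and writing $\beta=\rho_k+\nu\mu$ gives no control on the signs of $\mu$ (nor any reason $\rho_k$ should divide anything). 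Third, the diagonal tail $\sum_i\beta_i z_i^m$ over representatives of norm in $(B,L']$ only produces elements of the form $\beta_i\cdot(\text{$m$th power})$ and sums of such; it does not cover arbitrary elements of large norm, and you give no argument that your remaining blocks do.

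What is missing is precisely the step the paper supplies: take as scaling set $\mathcal{S}$ \emph{all} classes (mod $\mathcal{O}_K^{\times m}$) of norm in a window $(B,C]$ with $C$ chosen large enough via Lemma~\ref{l20}, and set $Q_B=\perp_{\alpha\in\mathcal{S}}\alpha Q$. For $\beta$ with $N(\beta)>C$, apply Lemma~\ref{l20} with a fixed rational integer $n$ satisfying $B<n^d\le C$ to get $\beta\varepsilon^m=n+\beta_1$ with $\beta_1\succ 0$ and $B<N(\beta_1)<N(\beta)$; \emph{iterate} until $B<N(\beta_k)\le C$, obtaining $\beta=n(\varepsilon_1^m+\cdots+\varepsilon_k^m)+\beta_k$. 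Now both $n$ and $\beta_k$ lie in classes from $\mathcal{S}$, and the factors $\varepsilon_1^m+\cdots+\varepsilon_k^m$ and $1$ are represented by the universal $Q$. This norm-reducing iteration is the key idea your proposal gestures at (you invoke Lemma~\ref{l20} once) but never carries out; without it, the claim that ``the $\rho_k$ handle norm $>L'$'' is unsupported.
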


\begin{proof}
Let $B,n$ be positive integers such that $n^d>B>L$. Let
$M=M_n\geq n^d$ be the integer from Lemma $\ref{l20}$ used for $n$.
Let $C=\max(M, c^{-d}B)$, where $0<c<1$ is the constant from Lemma $\ref{l17}$. Note that we have $C>B>L$.

    Let $\mathcal{S}$ be a (finite) set of representatives for $\alpha\in\mathcal{O}_K^+$ with $B<N(\alpha)\leq C$ modulo $\mathcal{O}_K^{\times m}$, and consider the $m$-ic form 
    \begin{equation*}
    Q_B=\underset{\alpha\in\mathcal{S}}{{\perp}}\alpha Q,
     \end{equation*}
     where $Q$ is the universal form from Theorem \ref{p22}. Since $Q$ is totally positive definite and $\alpha\in\mathcal{O}_K^+$, it follows that $Q_B$ is totally positive definite. 

     Let us prove that $Q_B$ represents exactly the elements of norm $>B$.
     
     If $\beta\in\mathcal{O}_K^+$ is an element of norm $\leq B$, 
     then $\beta$ is not the sum of totally positive integers of norm $>B$ 
     by the easy observation that if $\alpha\succ\beta\succ 0$, then $N(\alpha)>N(\beta)$      
     (for a more precise result, see \cite[Lemma $3.1$]{om1}).
     Thus $Q_B$ does not represent $\beta$. 

     Let $\beta\in\mathcal{O}_K^+$ be an element with $B<N(\beta)\leq C.$ Then we can write $\beta=\alpha\varepsilon^m$ for some $\alpha\in\mathcal{S}$ and some $\varepsilon\in\mathcal{O}_K^{\times}.$ Since $Q$ is universal, it represents the totally positive unit $\varepsilon^m.$ Hence $\beta$ is represented by $Q_B.$

    By Lemma \ref{l20}, for every $\beta\in\mathcal{O}_K^+$ with $N(\beta)>C,$ there is $\varepsilon\in\mathcal{O}_K^{\times}$ such that $$\beta\varepsilon^m=n+\theta_j+\gamma,$$ for some $j\in\{1,2,\dots, r\}$ and some $0\prec\gamma\in\sum\mathcal{O}_K^m.$ Furthermore, we have $N(\gamma)>c^dN(\beta)>c^dC\geq B$, and so also $N(\theta_j+\gamma)>B$.

    Letting $\varepsilon_1=\varepsilon^{-1}$ and $\beta_1=\varepsilon^{-m}(\theta_j+\gamma)$, we see that 
    $$\beta=n\varepsilon_1^m+\beta_1\text{ with }  B<N(\beta_1)<N(\beta).$$

    If $\beta_1$ still satisfies $N(\beta_1)>C,$
    then we can further decompose it as above, $\beta_1=n\varepsilon_2^m+\beta_2$, and then eventually $\beta_{i-1}=n\varepsilon_i^m+\beta_i$. As the norms of the elements $\beta_i$ are decreasing, eventually we obtain
    $$\beta=n(\varepsilon_1^m+\dots+\varepsilon_k^m)+\beta_k\text{ with } B< N(\beta_k)\leq C.$$

    As we also have $B< N(n)\leq C$, there are some representatives $\alpha\in\mathcal S$  that lie in the same $\mathcal{O}_K^{\times m}$-classes as the elements $n$ and $\beta_k$.  
    If necessary (i.e., if $n$ and $\beta_k$ have the same representative), we can group the summands together to write
    $$\beta=\sum_{\alpha\in\mathcal S}\alpha\delta_\alpha$$
    for some $\delta_\alpha\in\co_K^+\cup\{0\}$ (we have $\delta_\alpha\neq 0$ only for the 1 or 2 values of $\alpha\in\mathcal S$ that lie in the same classes as $n, \beta_k$).

    Each element $\delta_\alpha$ is represented by the universal form $Q$, and so $\beta$ is represented by $Q_B$ as desired.
\end{proof}

Now we are finally ready to prove the number field analogue of Theorem \ref{mainoverz}. 

\begin{theorem}\label{th:number field main}
    Let $K$ be a totally real number field, $m>2$ an even positive integer, and $\mathcal{A}_0$ a finite subset of $\mathcal{O}_K^+$. Set $\mathcal{A}=\mathcal{A}_0\cdot\mathcal{O}_K^{\times m}=\{\delta\varepsilon^m \mid \delta\in \mathcal{A}_0, \varepsilon\in\mathcal{O}_K^{\times}\}.$ Then the following conditions are equivalent:
    \begin{enumerate}
        \item 
    There exists a totally positive definite $m$-ic form that represents exactly $\left(\mathcal{O}_K^+\setminus \mathcal{A}\right)\cup\{0\}$. 
    \item For all $\alpha,\beta\in\mathcal{O}_K$ we have that $\alpha\beta^m\in\mathcal{A}$ implies $\alpha\in\mathcal{A}.$
    \end{enumerate}
\end{theorem}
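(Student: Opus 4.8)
The structure mirrors the proof of Theorem \ref{th:Z main}, with the ``large norm'' input coming from Proposition \ref{p23} instead of Proposition \ref{largeelement}. For the direction $(1)\Rightarrow(2)$, suppose the totally positive definite $m$-ic form $Q$ represents exactly $(\mathcal{O}_K^+\setminus\mathcal{A})\cup\{0\}$, and take $\alpha\notin\mathcal{A}$. If $\alpha=0$ then $\alpha\beta^m=0\notin\mathcal{A}$ trivially (since $0\notin\mathcal{A}$ as $\mathcal{A}\subset\mathcal{O}_K^+$), so assume $\alpha\in\mathcal{O}_K^+$. Then $Q(x)=\alpha$ for some $x\in\mathcal{O}_K^n$, and by homogeneity $Q(\beta x)=\alpha\beta^m$ for every $\beta\in\mathcal{O}_K$. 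Since $Q$ represents only elements of $(\mathcal{O}_K^+\setminus\mathcal{A})\cup\{0\}$, we get $\alpha\beta^m\notin\mathcal{A}$. Contrapositively, $\alpha\beta^m\in\mathcal{A}$ forces $\alpha\in\mathcal{A}$.

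For $(2)\Rightarrow(1)$, assume the implication in (2) holds. Fix the integer $L$ from Lemma \ref{l21} and choose any $B>L$; then $\mathcal{A}_0$ and hence $\mathcal{A}$ are contained in $\{\gamma\in\mathcal{O}_K^+ \mid N(\gamma)\le B\}\cdot\mathcal{O}_K^{\times m}$ after possibly enlarging $B$ so that $N(\delta)\le B$ for all $\delta\in\mathcal{A}_0$ (we may pick the $\mathcal{A}_0$-representatives to be norm-minimal in their $\mathcal{O}_K^{\times m}$-classes). Let $Q_B$ be the form from Proposition \ref{p23}: it represents exactly the totally positive integers of norm $>B$ (together with $0$). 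Next I build a ``small norm'' piece $Q'$ to represent those $\beta\in\mathcal{O}_K^+$ with $N(\beta)\le B$ that are \emph{not} in $\mathcal{A}$. Pick representatives $\{\beta_1,\dots,\beta_k\}$, one from each $\mathcal{O}_K^{\times m}$-class of elements of norm $\le B$ that is disjoint from $\mathcal{A}$, and set
\begin{equation*}
Q'(y_1,\dots,y_k)=\sum_{i=1}^k \beta_i y_i^m + \sum_{1\le i<j\le k}\delta\, y_i^2 y_j^{m-2}
\end{equation*}
for a fixed totally positive $\delta\in\mathcal{O}_K$ whose house is large enough — precisely, $\house{\delta}>d\cdot\max_i\house{\beta_i}$ will do, so that any ``mixed'' value $Q'(y)$ with two nonzero coordinates has a conjugate of size exceeding everything in $\mathcal{A}$'s norm range. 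The claimed form is $Q=Q_B\perp Q'$; it is totally positive definite since both summands are.

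That $Q$ represents every element of $(\mathcal{O}_K^+\setminus\mathcal{A})\cup\{0\}$ is immediate: $0$ is represented by the zero vector, elements of norm $>B$ by $Q_B$, and elements of norm $\le B$ not in $\mathcal{A}$ by plugging a unit into the appropriate $y_i$-slot of $Q'$ (using that representation by $Q'$ is closed under multiplying the argument by units, hence under multiplying the value by $\mathcal{O}_K^{\times m}$). Conversely, suppose $Q$ represents some $\alpha\in\mathcal{A}$; then $N(\alpha)\le B$. Writing the representation through the orthogonal sum, $\alpha=u+v$ with $u$ represented by $Q_B$ and $v$ by $Q'$; but $Q_B$ represents no totally positive element of norm $\le B$, and $\alpha\succ u\succeq 0$ would force $u=0$ unless $u\in\mathcal{O}_K^+$ with $N(u)>B$ — here I invoke the monotonicity $\alpha\succ u\succ 0 \Rightarrow N(\alpha)>N(u)$ used in Proposition \ref{p23}, together with $N(\alpha)\le B$, to conclude $u=0$ and $\alpha=v$ is represented by $Q'$. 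Then the house condition on $\delta$ rules out two nonzero $y_i$, so $\alpha=\beta_i y_i^m$ for a single index $i$; by hypothesis (2), $\alpha=\beta_i y_i^m\in\mathcal{A}$ implies $\beta_i\in\mathcal{A}$, contradicting the choice of $\beta_i$ in a class disjoint from $\mathcal{A}$. Hence $Q$ represents exactly $(\mathcal{O}_K^+\setminus\mathcal{A})\cup\{0\}$.

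**Main obstacle.** The step that needs the most care is the ``$u=0$'' deduction: over $\mathbb{Z}$ one simply used that the smallest value of $Q_B$ is $0$ and the next is $>B$, but over $\mathcal{O}_K$ the orthogonal decomposition $\alpha=u+v$ has $u,v$ only totally \emph{non-negative}, and one must argue that $u\ne 0$ together with $u$ being a genuine value of $Q_B$ (hence totally positive of norm $>B$) is incompatible with $u\preceq\alpha$ and $N(\alpha)\le B$ — this is exactly where the totally-positive-orthant geometry and norm monotonicity enter, and it is also why $\mathcal{A}$ must be taken up to $\mathcal{O}_K^{\times m}$ rather than up to all units (the condition in (2) quantifies over all $\beta\in\mathcal{O}_K$, but the form only ``sees'' values up to $\mathcal{O}_K^{\times m}$). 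A secondary point to check carefully is that finitely many $\mathcal{O}_K^{\times m}$-class representatives of norm $\le B$ suffice, which follows from the standard finiteness used already in the proof of Theorem \ref{p22}.
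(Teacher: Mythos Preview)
Your overall architecture is correct and matches the paper's proof: use Proposition~\ref{p23} for large norms, adjoin a diagonal-plus-cross-term form $Q'$ indexed by representatives of the small-norm classes outside $\mathcal{A}$, and argue that any putative representation of an element of $\mathcal{A}$ must come entirely from $Q'$ and then from a single diagonal term.

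There is, however, a genuine gap in the step ``the house condition on $\delta$ rules out two nonzero $y_i$.'' The condition $\house{\delta}>d\cdot\max_i\house{\beta_i}$ does \emph{not} suffice. Elements of $\mathcal{A}=\mathcal{A}_0\cdot\mathcal{O}_K^{\times m}$ can have arbitrarily large house (just multiply by a large power of a unit), so ``a conjugate of size exceeding everything in $\mathcal{A}$'s norm range'' yields no contradiction. Concretely, a totally positive $\delta$ can have large house but tiny norm (take $\delta$ close to a totally positive unit), and then for suitable $y_i,y_j\in\mathcal{O}_K\setminus\{0\}$ the cross term $\delta\,y_i^2 y_j^{m-2}$ may have norm $\le B$; nothing prevents $Q'(y)$ from landing in $\mathcal{A}$. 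What you actually need --- and what the paper does --- is a \emph{norm} condition: choose $\delta\in\mathcal{O}_K^+$ with $N(\delta)>B$. Then from $\gamma\succ\delta\,y_i^2 y_j^{m-2}\succ 0$ and the norm monotonicity you already invoke for the $u=0$ step, you get
\[
N(\gamma)\;>\;N(\delta)\,N(y_i)^2 N(y_j)^{m-2}\;\ge\;N(\delta)\;>\;B,
\]
contradicting $N(\gamma)\le B$. With this single change (replace the house bound by $N(\delta)>B$), your argument goes through and is essentially identical to the paper's.
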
  

\begin{proof}
    $( (1) \Rightarrow (2) )$ The proof of this implication is the same as in Theorem \ref{th:Z main}.

     $( (2) \Rightarrow (1) )$ Assume that for all $\alpha,\beta\in \mathcal{O}_K,$ we have that $\alpha\beta^m\in\mathcal{A}$ implies $\alpha\in\mathcal{A}.$ 

 Fix a positive integer $L$ from Lemma \ref{l21} and let $B=\max\left(L+1, \max_{\alpha\in\mathcal{A}}(N(\alpha))\right)$.
 Let $Q_B$ be a form from Proposition \ref{p23} that represents all elements of $\mathcal{O}_K^+$ of norm $>B$ and does not represent any element of norm $\leq B.$

 Let $\mathcal S$ be a (finite) set of representatives of classes of elements $\alpha\in\mathcal{O}_K^+\setminus \mathcal{A}$, $N(\alpha)\leq B$, up to multiplication by elements of 
 $\mathcal{O}_K^{\times m}$. Note that by the assumption on $\mathcal A$, for every $\varepsilon^m\in \mathcal{O}_K^{\times m}$ we have $\alpha\in\mathcal{O}_K^+\setminus \mathcal{A}$ if and only if $\alpha\varepsilon^m\in\mathcal{O}_K^+\setminus \mathcal{A}$, and so $\mathcal S$ is well-defined.

 Finally, fix some $\mu\in\co_K^+$ with $N(\mu)>B$ and let
 $$Q=Q_B\perp q,\text{ where } q=\sum_{\alpha\in \mathcal S}\alpha x_\alpha^m+\mu\sum_{\alpha\neq\beta\in \mathcal S} x_\alpha^2x_\beta^{m-2}.$$

The form $Q$ is clearly totally positive definite and $m$-ic, and so we need to show that it represents exactly $\left(\mathcal{O}_K^+\setminus \mathcal{A}\right)\cup\{0\}$.
 
\medskip

Consider $\gamma\in\mathcal A$ and assume for contradiction that $Q$ represents $\gamma$. We have $N(\gamma)\leq B$ and as $Q_B$ does not represent any element of norm $\leq B$, $\gamma$ is represented by $q$, i.e., $\gamma=q(x_\alpha\mid \alpha\in\mathcal S)$ for some $x_\alpha\in\co_K$.

If $x_\alpha\neq 0, x_\beta\neq 0$ for some $\alpha\neq\beta$, then 
$$\gamma=\sum_{\alpha\in \mathcal S}\alpha x_\alpha^m+\mu\sum_{\alpha\neq\beta\in \mathcal S}x_\alpha^2x_\beta^{m-2}\succ\mu x_\alpha^2x_\beta^{m-2}.$$
But then $B\geq N(\gamma)>N(\mu x_\alpha^2x_\beta^{m-2})\geq N(\mu)>B,$
a contradiction.

Thus exactly one $x_\alpha\neq 0$ and $x_\beta= 0$ for all $\beta\neq\alpha$. But then $\mathcal A\ni \gamma=q(x_\alpha\mid \alpha\in\mathcal S)=\alpha x_\alpha^m$. By the assumption on $\mathcal A$, this implies that $\alpha\in\mathcal A$, a contradiction with the choice of $\mathcal S$.

\medskip

Now let $\gamma\not\in\mathcal A,\gamma\neq 0$ ($0$ is represented trivially). If $N(\gamma)\leq B$, then $\gamma=\alpha\varepsilon^m$ for some $\alpha\in\mathcal S$ and $\varepsilon^m\in \mathcal{O}_K^{\times m}$. Thus $\alpha x_\alpha^m$ represents $\gamma$, which in turn implies that $q$ and $Q$ also represent $\gamma$.

If $N(\gamma)> B$, then $\gamma$ is represented by $Q_B$, and thus also by $Q$.
\end{proof}

\providecommand{\bysame}{\leavevmode\hbox to3em{\hrulefill}\thinspace}
\providecommand{\MR}{\relax\ifhmode\unskip\space\fi MR }
\providecommand{\MRhref}[2]{%
	\href{http://www.ams.org/mathscinet-getitem?mr=#1}{#2}
}
\providecommand{\href}[2]{#2}

\end{document}